\numberwithin{equation}{section}
\theoremstyle{plain}
\newtheorem{theorem}{Theorem}[section] 
\newtheorem{lemma}[theorem]{Lemma}
\newtheorem{corollary}[theorem]{Corollary}
\newtheorem{definition}[theorem]{Definition}
\newtheorem{assumption}[theorem]{Assumption}
\theoremstyle{remark}
\newtheorem{example}[theorem]{Example}
\newtheorem{remark}[theorem]{Remark}
\newcommand{\R}{\mathbb{R}}
\newcommand{\N}{\mathbb{N}}
\newcommand{\bigO}{\mathcal{O}}
\newcommand{\grad}{\nabla}
\renewcommand{\t}[1]{\widetilde{#1}}
\algrenewcommand\algorithmicrequire{\textbf{Input:}}
\algrenewcommand\algorithmicensure{\textbf{Output:}}
\newcommand{\Lf}{L_{f}}
\newcommand{\Lg}{L_{g}}
\title{Non-Uniform Smoothness for Gradient Descent}
\author{Albert S. Berahas\thanks{Department of Industrial and Operations Engineering, University of Michigan (\texttt{albertberahas@gmail.com}).} 
\and 
Lindon Roberts\thanks{School of Mathematics and Statistics, University of Sydney (\texttt{lindon.roberts@sydney.edu.au}).} 
\and 
Fred Roosta\thanks{School of Mathematics and Physics, University of Queensland, Australia, and International Computer Science
Institute, Berkeley, USA. (\texttt{fred.roosta@uq.edu.au}).}}
\date{\today}
\begin{document}

\maketitle

\begin{abstract}
    The analysis of gradient descent-type methods typically relies on the Lipschitz continuity of the objective gradient.
    This generally requires an expensive hyperparameter tuning process to appropriately calibrate a stepsize for a given problem.
    In this work we introduce a local first-order smoothness oracle (LFSO) which generalizes the Lipschitz continuous gradients smoothness condition and is applicable to any twice-differentiable function.
    We show that this oracle can encode all relevant problem information for tuning stepsizes for a suitably modified gradient descent method and give global and local convergence results.
    We also show that LFSOs in this modified first-order method can yield global linear convergence rates for non-strongly convex problems with extremely flat minima, and thus improve over the lower bound on rates achievable by general (accelerated) first-order methods.
\end{abstract}

\textbf{AMS Subject classification:} 65K05, 90C30.

\textbf{Keywords:} nonlinear optimization, gradient descent, non-uniform smoothness

\section{Introduction}
In this work, we consider gradient descent-type algorithms for solving unconstrained optimization problems
\begin{align} \label{eq_prob}
    \min_{x\in\R^d} f(x),
\end{align}
where $f:\R^d\to \R$ is continuously differentiable.
Gradient descent-type methods for \eqref{eq_prob} have seen renewed interest arising from applications in data science, where $d$ is potentially very large~\cite{Bottou2018,Wright2018}. 
When analyzing such methods with a fixed stepsize, it is most commonly assumed that $\grad f$ is $\Lf$-Lipschitz continuous and assume a stepsize of order $1/\Lf$, e.g.~\cite[Section 4]{Wright2018}.
The key reason behind this smoothness assumption is the resulting bound
\begin{align*}
    |f(y) - f(x) - \grad f(x)^T (y-x)| \leq \frac{\Lf}{2}\|y-x\|_2^2, \qquad \forall x,y\in\R^d.
\end{align*}

In practice, however, $\Lf$ is usually not known.
The simplest approach in this situation is to try several stepsize choices as part of a (possibly automated) hyperparameter tuning process~\cite{Snoek2012}.
That said there are often benefits to changing the stepsize at each iteration, with common approaches including linesearch methods~\cite{Nocedal2006}, Polyak's stepsize for convex functions~\cite{Polyak1969} (although this requires knowledge of the global minimum of $f$) and the Barzilai-Borwein stepsize~\cite{Barzilai1988} (although global convergence theory exists only for strictly convex quadratic functions, and does not converge for some (non-quadratic) strongly convex functions without modification~\cite{Burdakov2019}).

In the setting where $\Lf$ is known, this approach is still limited pessimistic, as the stepsize is chosen to be sufficiently small to ensure decrease throughout the domain, as it reflects the global smoothness of $f$.
In an iterative method it suffices to ensure decrease from one iterate to the next, and so of greater relevance is the \emph{local smoothness} of $f$ in a neighborhood of the current iterate.

There are two natural ways that local smoothness could be used to modify the stepsizes in gradient descent.
First, we could dynamically estimate $\Lf$ by looking at how rapidly $\grad f$ is changing near the current iterate.
This approach is followed in \cite{Malitsky2020}, where $\Lf$ is estimated based on $\|\grad f(x_k) - \grad f(x_{k-1})\|_2/ \|x_k - x_{k-1}\|_2$.
This estimation procedure allows for global convergence of a hyperparameter-free variant of gradient descent for convex functions.
Alternatively, we could assume the existence of an oracle that gives us a suitable local Lipschitz constant for $\grad f$~\cite{Mei2021}.  There, it is assumed that one has access to an oracle $L(x)$ that satisfies
\begin{align}
    |f(y) - f(x) - \grad f(x)^T (y-x)| \leq \frac{L(x)}{2}\|y-x\|_2^2, \qquad \forall y\in\R^d. \label{eq_Mei_oracle}
\end{align}
Although some useful properties arise when allowing a stepsize of $1/L(x_k)$ for gradient descent (at current iterate $x_k$)---for example faster convergence than gradient descent with fixed stepsizes---the oracle $L(x)$ has limitations.
For example, if we want to exploit extremely flat regions around minima, we may want $L(x)\to 0$ as $x\to x^*$, but for \eqref{eq_Mei_oracle} to hold with $L(x^*)=0$ requires $f$ be a constant function.

In a separate line of work, \cite{Zhang2020} considers gradient clipping methods applied to functions satisfying a smoothness condition of the form $\|\grad^2 f(x)\|_2 \leq L_0 + L_1 \|\grad f(x)\|_2$, which was empirically motivated by training deep neural networks and also applies to some distributionally robust optimization problems~\cite{Jin2021}.
This condition was slightly relaxed in~\cite{Xie2023} to the generalized Lipschitz condition
\begin{align}
    \|\grad f(y) - \grad f(x)\|_2 \leq (L_0 + L_1 \|\grad f(x)\|_2) \cdot \|y-x\|_2, \quad \forall x\in\R^d, \: \forall y\in B(x,1/L_1),
\end{align}
noting in particular that the inequality only needs to hold for $x$ and $y$ sufficiently close.

In this work, we consider a more general and localized form of the approach from~\cite{Mei2021}, where our oracle only requires bounds similar to \eqref{eq_Mei_oracle} to hold for $y\in B(x,R)$ for some $R>0$.
Such a local first-order smoothness oracle (LFSO), gives a local Lipschitz constant over an explicitly defined neighborhood. 
Such LFSOs exist for any $C^2$ function (not just those with Lipschitz continuous gradients), including those satisfying the weaker smoothness condition from~\cite{Zhang2020,Jin2021} mentioned above.
Given the availability of a LFSO, we show how this can be used to construct a convergent gradient descent-type iteration that does not require any hyperparameter tuning.
Similar to the case of $1/\Lf$ stepsizes, this parameter-free result comes from the LFSO capturing all the relevant problem information. 

To show the promise of this approach, we demonstrate in Section~\ref{sec_composition} how the incorporation of a LFSO allows gradient descent to achieve a \emph{global} linear convergence rate to arbitrarily flat minima of convex functions.
We do this by considering a class of objective functions which include $f(x)=\|x\|_2^{2p}$ for $p\to \infty$, where $f(x)$ is extremely flat near the minimizer $x^*=0$.
This is achieved by the LFSO allowing larger stepsizes as $x\to x^*$, counteracting the degeneracy of the minimizer.
These functions are known to be difficult for gradient descent iterations, and in fact provide the worst-case lower bounds for accelerated gradient descent \cite{Attouch2018,Attouch2022}. Further, in Section~\ref{sec_linreg} we show that our LFSO approach gives global linear convergence rates for objectives of the form $f(x)=\|x\|_{2p}^{2p}$ for $p\to\infty$, an alternative set of objectives with very degenerate minimizers.
Our numerical results in Section~\ref{sec_numerics} show the global linear rate achieved in practice by the LFSO approach for both types of objective functions, compared to the (expected) sublinear rate for standard gradient descent.

This introduction and first analysis of LFSO methods demonstrates it to be a promising avenue for future work, by giving an explicit mechanism to decoupling stepsize selection from objective smoothness (and hence avoiding hyperparameter tuning), and naturally adapting to extremely flat regions of the objective function and allowing for an improvement over existing (accelerated) first-order methods.

\section{Algorithmic Framework}
We now introduce our new oracle and show our algorithmic framework which incorporates the LFSO oracle into a gradient descent-type method.

\begin{definition}
    Suppose $f:\R^d\to\R$ is continuously differentiable.
    A function $L:\R^d\times (0,\infty)\to [0,\infty)$ is a \emph{local first-order smoothness oracle (LFSO)} for $f$ if 
    \begin{align}
        |f(y) - f(x) - \grad f(x)^T (y-x)| \leq \frac{L(x,R)}{2}\|y-x\|^2, \label{eq_nonuniform2}
    \end{align}
    for all $x\in\R^d$, all $y\in B(x,R)$ and  for some $R>0$, and $L$ is non-decreasing in the second argument $R$.
\end{definition}

For example, if $f$ is $\Lf$-smooth---i.e.,~$\grad f$ is  Lipschitz continuous with constant $\Lf$---then $L(x,R)=\Lf$ defines an LFSO for $f$.
Treating this as an oracle rather than a problem constant encodes the common algorithmic assumption that $\Lf$ is known.
However, LFSOs exist for a much broader class than $\Lf$-smooth functions.
\begin{lemma} \label{lem_c2}
    If $f$ is $C^2(\R^d)$, then $L(x,R) = \max_{y\in B(x,R)} \|\grad^2 f(y)\|$ is an LFSO for $f$.
\end{lemma}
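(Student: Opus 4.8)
The plan is to reduce the inequality \eqref{eq_nonuniform2} to a one-dimensional Taylor expansion along the segment joining $x$ and $y$, and then to control the resulting second-order term by the operator norm of the Hessian. First I would observe that, since $f\in C^2(\R^d)$, the map $y\mapsto\|\grad^2 f(y)\|$ is continuous, so on the compact (closed) ball $B(x,R)$ it attains its maximum; this guarantees that $L(x,R)=\max_{y\in B(x,R)}\|\grad^2 f(y)\|$ is finite and well-defined for every $x\in\R^d$ and every $R>0$.

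For the central inequality, fix $x$ and $y\in B(x,R)$ and introduce the scalar function $g(t)=f(x+t(y-x))$ for $t\in[0,1]$, so that $g(0)=f(x)$, $g(1)=f(y)$, and $g'(0)=\grad f(x)^T(y-x)$. Applying the second-order mean value theorem (equivalently, Taylor's theorem with integral remainder) to $g$ produces a point $\xi$ on the segment $[x,y]$ for which
\begin{align*}
    f(y)-f(x)-\grad f(x)^T(y-x) = \tfrac12\,(y-x)^T \grad^2 f(\xi)\,(y-x).
\end{align*}
Taking absolute values and using the definition of the operator norm then gives
\begin{align*}
    |f(y)-f(x)-\grad f(x)^T(y-x)| \leq \tfrac12\,\|\grad^2 f(\xi)\|\,\|y-x\|^2.
\end{align*}

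The key remaining step is to bound $\|\grad^2 f(\xi)\|$ uniformly by $L(x,R)$. Here I would use that the entire segment $[x,y]$ lies in $B(x,R)$: for $t\in[0,1]$ we have $\|(x+t(y-x))-x\|=t\,\|y-x\|\leq\|y-x\|\leq R$, so in particular $\xi\in B(x,R)$ and hence $\|\grad^2 f(\xi)\|\leq L(x,R)$. Substituting this into the previous display yields exactly \eqref{eq_nonuniform2}. Finally, monotonicity in the second argument is immediate: if $R_1\leq R_2$ then $B(x,R_1)\subseteq B(x,R_2)$, so the maximum defining $L(x,R_2)$ is taken over a larger set, whence $L(x,R_1)\leq L(x,R_2)$.

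Since the argument is a direct application of Taylor's theorem, I do not expect a substantive obstacle. The only points requiring genuine care are ensuring the maximum is attained---which is why $B(x,R)$ is taken to be the closed ball and why continuity of $\grad^2 f$ is used---and verifying that the whole segment $[x,y]$, not merely its endpoints, remains inside $B(x,R)$, so that the Hessian bound applies uniformly along the path rather than only at $x$ and $y$.
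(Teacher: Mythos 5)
Your proof is correct and is essentially the paper's argument made explicit: the paper simply notes monotonicity in $R$ is straightforward and cites the standard $C^2$ Taylor bound (\cite[Corollary A.8.4 \& Theorem A.8.5]{Cartis2022}), which is exactly the segment-based Taylor expansion with Lagrange remainder that you write out. Your added care about the maximum being attained on the closed ball and the segment $[x,y]$ lying entirely in $B(x,R)$ is appropriate but does not change the approach.
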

\begin{proof}
    It is straightforward that $L$ is non-decreasing in $R$. 
    In this case, the property \eqref{eq_nonuniform2} is well-known, for instance \cite[Corollary A.8.4 \& Theorem A.8.5]{Cartis2022}
\end{proof}

Another common smoothness class is the case of $C^2$ functions with Lipschitz continuous Hessians.
In this case, Lemma~\ref{lem_c2} gives us a simple, explicit form for an LFSO.

\begin{lemma}
    If $f$ is $C^2(\R^d)$ and $\grad^2 f$ is $L_H$-Lipschitz continuous, then $L(x,R) = \|\grad^2 f(x)\| + L_H R$ is an LFSO for $f$.
\end{lemma}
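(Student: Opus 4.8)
The plan is to invoke Lemma~\ref{lem_c2} and then bound the maximum Hessian norm over the ball $B(x,R)$ using Lipschitz continuity of $\grad^2 f$. Since $f$ is $C^2(\R^d)$, Lemma~\ref{lem_c2} tells us that $\tilde L(x,R) := \max_{y\in B(x,R)} \|\grad^2 f(y)\|$ is an LFSO. It therefore suffices to show that $\tilde L(x,R) \leq \|\grad^2 f(x)\| + L_H R$, because any function that dominates an LFSO and is non-decreasing in $R$ is itself an LFSO (the defining inequality \eqref{eq_nonuniform2} only becomes weaker when $L$ is increased).

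First I would fix $x\in\R^d$ and $R>0$ and take an arbitrary $y\in B(x,R)$, so $\|y-x\|\leq R$. Using the triangle inequality on operator norms, I would write
\begin{align*}
    \|\grad^2 f(y)\| \leq \|\grad^2 f(x)\| + \|\grad^2 f(y) - \grad^2 f(x)\|.
\end{align*}
The Lipschitz continuity of the Hessian then gives $\|\grad^2 f(y) - \grad^2 f(x)\| \leq L_H \|y-x\| \leq L_H R$. Combining these yields $\|\grad^2 f(y)\| \leq \|\grad^2 f(x)\| + L_H R$ for every $y\in B(x,R)$, and taking the maximum over $y$ gives $\tilde L(x,R) \leq \|\grad^2 f(x)\| + L_H R =: L(x,R)$.

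It remains to confirm that this larger $L(x,R)$ is genuinely an LFSO. Since $\tilde L(x,R)$ satisfies \eqref{eq_nonuniform2} and $L(x,R)\geq \tilde L(x,R)$, the bound \eqref{eq_nonuniform2} holds a fortiori with $L$ in place of $\tilde L$. Finally, $L(x,R) = \|\grad^2 f(x)\| + L_H R$ is clearly non-decreasing (indeed affine and increasing) in $R$ since $L_H \geq 0$, so the monotonicity requirement in the LFSO definition is met.

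This argument is essentially routine, so I do not anticipate a genuine obstacle; the only point requiring a little care is making explicit why passing from the tight oracle $\tilde L$ to the looser closed-form bound $L$ preserves the LFSO property. The crucial observation is that the LFSO definition is an upper bound, so enlarging $L$ cannot invalidate \eqref{eq_nonuniform2}, and monotonicity in $R$ is preserved because the correction term $L_H R$ is itself monotone. One subtlety worth stating is that the maximum defining $\tilde L$ is attained because $\grad^2 f$ is continuous and $B(x,R)$ is compact, which is implicitly what Lemma~\ref{lem_c2} relies on.
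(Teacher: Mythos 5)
Your proposal is correct and follows essentially the same route as the paper: invoke Lemma~\ref{lem_c2}, bound $\max_{y\in B(x,R)}\|\grad^2 f(y)\|$ via the triangle inequality and Hessian Lipschitz continuity, and note that the dominating closed-form bound remains an LFSO because it is non-decreasing in $R$ (the content of Lemma~\ref{lem_increasing}, which the paper leaves implicit). Your added care in spelling out the monotonicity check and the attainment of the maximum is fine but not a departure from the paper's argument.
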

\begin{proof}
    This follows from Lemma~\ref{lem_c2} and that $\|\grad^2 f(y)\| \leq \|\grad^2 f(x)\| + L_H \|x-y\|$ for all $y\in B(x,R)$.
\end{proof}

A useful property that we will use to get explicit formulae for LFSOs is the following straightforward consequence of \eqref{eq_nonuniform2}.

\begin{lemma} \label{lem_increasing}
    Suppose $L$ is a LFSO for $f$ and $\tilde{L}(x,R) \geq L(x,R)$ for all $x\in\R^d$ and all $R> 0$.
    Then $\tilde{L}$ is an LFSO for $f$ if and only if $\tilde{L}$ is non-decreasing in $R$.
\end{lemma}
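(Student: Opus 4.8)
The plan is to read off the two implications directly from the definition of an LFSO, keeping in mind that being an LFSO bundles two separate requirements: the smoothness bound \eqref{eq_nonuniform2} and monotonicity in the second argument $R$. The whole point of the equivalence is that the first requirement transfers to $\tilde{L}$ for free (because inflating the constant can only make the bound easier to satisfy), so the only condition that could genuinely fail is the monotonicity in $R$, which is exactly what the ``if and only if'' isolates.

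The forward direction is immediate: if $\tilde{L}$ is an LFSO for $f$, then by the very definition of an LFSO it is non-decreasing in $R$, and there is nothing further to check.

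For the reverse direction, I would assume $\tilde{L}$ is non-decreasing in $R$ and verify that it meets both defining properties of an LFSO. Monotonicity in $R$ holds by this assumption, so it remains only to establish the smoothness bound. Since $L$ is an LFSO, for every $x\in\R^d$ and every $y\in B(x,R)$ we have
\begin{align*}
    |f(y) - f(x) - \grad f(x)^T (y-x)| \leq \frac{L(x,R)}{2}\|y-x\|^2.
\end{align*}
Because $\tilde{L}(x,R)\geq L(x,R)$ and $\|y-x\|^2\geq 0$, replacing $L$ by $\tilde{L}$ only enlarges the right-hand side, so the same inequality holds with $\tilde{L}(x,R)$ in place of $L(x,R)$. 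Hence $\tilde{L}$ satisfies \eqref{eq_nonuniform2} as well as the monotonicity requirement, and is therefore an LFSO for $f$.

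There is no substantive obstacle here; the only point worth flagging is the observation that \eqref{eq_nonuniform2} is a one-sided constraint on the constant that is preserved under passing to any pointwise-larger function, which is what makes the smoothness half of the definition automatic and leaves monotonicity in $R$ as the sole binding condition.
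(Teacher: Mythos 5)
Your proof is correct and is precisely the straightforward argument the paper has in mind (the paper states Lemma~\ref{lem_increasing} without proof, calling it a direct consequence of \eqref{eq_nonuniform2}). You correctly identify that the bound \eqref{eq_nonuniform2} is preserved under pointwise enlargement of the constant, so monotonicity in $R$ is the only condition with content, which is exactly what the equivalence asserts.
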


To incorporate an LFSO into a gradient descent-like iteration, a natural step would be to consider an iteration of the form
\begin{align}
    x_{k+1} = x_k - \frac{\eta}{L(x_k,R_k)}\grad f(x_k), \label{eq_basic_gd}
\end{align}
for some appropriately chosen values of $R_k>0$ and $\eta>0$.
In the case where $f$ is $\Lf$-smooth, standard convergence theory (e.g.,~\cite[Theorem 4.2.1]{Wright2018}) would suggest that a value such as $\eta=1$ is appropriate.
This shows that the LFSO notion encapsulates the problem-specific aspect of choosing a suitable stepsize.

However, the choice of $R_k$ is not so straightforward: for property \eqref{eq_nonuniform2} to be useful---typically to prove that $f(x_{k+1}) < f(x_k)$---we would need $x_{k+1}\in B(x_k, R_k)$.
This is not guaranteed if $R_k$ is chosen too small, as the following example illustrates.

\begin{example}
    If $d=1$ and $f(x)=x^4$ then a suitable LFSO is
    \begin{align*}
        L(x,R) = 24x^2 + 24R^2,
    \end{align*}
    using Lemmas \ref{lem_c2} and \ref{lem_increasing}, as well as 
    \begin{align*}
        \max_{|y-x|\leq R} 12y^2 = 12\max\{(x-R)^2, (x+R)^2\} \leq 12(2x^2 + 2R^2),
    \end{align*}
    from Young's inequality.
    Suppose $x_k=1$ and $\eta=1$, then from \eqref{eq_basic_gd} we may compute
    \begin{align*}
        |x_{k+1}-x_k| = \frac{1}{6+6R_k^2}.
    \end{align*}
    and so $x_{k+1} \notin B(x_k,R_k)$ whenever $R_k$ is smaller than $R^*\approx 0.16238$, the unique real root of $p(R)=6R^3+6R^2-1$.
\end{example}

To avoid this issue, we first pick an arbitrary value for $R_k$, then possibly increase it to a sufficiently large value that $x_{k+1}$ is in the required neighborhood of $x_k$.
The resulting method is given in Algorithm~\ref{alg_lfso_gd}.
Note that it requires one evaluation of $\grad f$ and possibly two evaluations  of $L$ (oracle calls) per iteration.

\begin{algorithm}[tb]
\begin{algorithmic}[1]
\Require Starting point $x_0\in\R^d$, stepsize factor $\eta>0$.
\vspace{0.2em}
\For{$k=0,1,2,\ldots$}
    \State Choose any $R_k>0$
    \State Set
    \begin{align}
        \t{R}_k := \max\left\{R_k, \frac{\eta}{L(x_k,R_k)}\|\grad f(x_k)\| \right\}. \label{eq_Rtilde_defn}
    \end{align}
    \State Iterate
    \begin{align}
        x_{k+1} = x_k - \frac{\eta}{L(x_k,\t{R}_k)}\grad f(x_k). \label{eq_nonuniform_iteration}
    \end{align}
\EndFor
\end{algorithmic}
\caption{Gradient Descent with LFSO.}
\label{alg_lfso_gd}
\end{algorithm}

\section{Convergence Analysis}
In this section we provide global convergence results for the general nonconvex and PL/strongly convex cases, and also local convergence rates to non-degenerate local minima.
To enable these, we prove a descent lemma (Lemma~\ref{lem_descent}) suitable for Algorithm~\ref{alg_lfso_gd}.

\begin{assumption} \label{ass_smoothness}
    The function $f:\R^d\to\R$ is continuously differentiable and bounded below by $f^*$, and $L$ is an LFSO for $f$.
\end{assumption}

\begin{lemma} \label{lem_descent}
    If Assumption~\ref{ass_smoothness} holds, then Algorithm~\ref{alg_lfso_gd} produces iterates satisfying
    \begin{align}
        f(x_{k+1}) \leq f(x_k) -\frac{\eta}{L(x_k,\t{R}_k)}\left(1-\frac{\eta}{2}\right)\|\grad f(x_k)\|^2,
    \end{align}
    for all $k=0,1,\ldots$.
\end{lemma}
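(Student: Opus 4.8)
The plan is to apply the LFSO inequality~\eqref{eq_nonuniform2} directly with $x=x_k$ and $y=x_{k+1}$, which is exactly what the construction of $\t{R}_k$ is designed to make legitimate. First I would verify that $x_{k+1}\in B(x_k,\t{R}_k)$, so that the oracle bound is actually applicable at the radius $\t{R}_k$. From the iteration~\eqref{eq_nonuniform_iteration} we have
\begin{align*}
    \|x_{k+1}-x_k\| = \frac{\eta}{L(x_k,\t{R}_k)}\|\grad f(x_k)\|,
\end{align*}
so I need this quantity to be at most $\t{R}_k$. This is where the definition~\eqref{eq_Rtilde_defn} does the work: since $\t{R}_k\geq R_k$ and $L$ is non-decreasing in its second argument, we have $L(x_k,\t{R}_k)\geq L(x_k,R_k)$, and combining this with $\t{R}_k\geq \frac{\eta}{L(x_k,R_k)}\|\grad f(x_k)\|$ gives
\begin{align*}
    \|x_{k+1}-x_k\| = \frac{\eta}{L(x_k,\t{R}_k)}\|\grad f(x_k)\| \leq \frac{\eta}{L(x_k,R_k)}\|\grad f(x_k)\| \leq \t{R}_k,
\end{align*}
as required.

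Once membership in the ball is established, the second step is a routine substitution. Applying~\eqref{eq_nonuniform2} at radius $\t{R}_k$ yields
\begin{align*}
    f(x_{k+1}) \leq f(x_k) + \grad f(x_k)^T(x_{k+1}-x_k) + \frac{L(x_k,\t{R}_k)}{2}\|x_{k+1}-x_k\|^2.
\end{align*}
Substituting $x_{k+1}-x_k = -\frac{\eta}{L(x_k,\t{R}_k)}\grad f(x_k)$, the inner-product term becomes $-\frac{\eta}{L(x_k,\t{R}_k)}\|\grad f(x_k)\|^2$ and the quadratic term becomes $\frac{\eta^2}{2 L(x_k,\t{R}_k)}\|\grad f(x_k)\|^2$. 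Collecting these gives the claimed bound after factoring out $\frac{\eta}{L(x_k,\t{R}_k)}\|\grad f(x_k)\|^2$.

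One technical point I would want to handle carefully is the degenerate case $\grad f(x_k)=0$, where the ratio $\frac{\eta}{L(x_k,R_k)}\|\grad f(x_k)\|$ is zero; here $\t{R}_k=R_k>0$, the iterate is stationary ($x_{k+1}=x_k$), and the inequality holds trivially with both sides equal to $f(x_k)$, so no division-by-zero issue arises provided $L(x_k,R_k)>0$. I expect the main (and only real) obstacle to be the verification that $x_{k+1}\in B(x_k,\t{R}_k)$; everything afterward is the standard descent-lemma algebra. The subtlety is that the natural radius bound guaranteed by~\eqref{eq_Rtilde_defn} is phrased in terms of $L(x_k,R_k)$, whereas the actual step length uses the potentially larger constant $L(x_k,\t{R}_k)$, and it is precisely the monotonicity of $L$ in $R$ that reconciles the two — making the step no larger than the bound that guarantees it stays inside the ball.
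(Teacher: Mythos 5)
Your proposal is correct and follows essentially the same route as the paper's proof: establishing $\|x_{k+1}-x_k\| \leq \t{R}_k$ via the monotonicity of $L$ in $R$ together with the definition \eqref{eq_Rtilde_defn}, then applying \eqref{eq_nonuniform2} and collecting terms. Your additional remark on the degenerate case $\grad f(x_k)=0$ is a harmless extra observation not spelled out in the paper.
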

\begin{proof}
    Since $\t{R}_k \geq R_k$ and $L$ is non-decreasing in $R$, we have that
    \begin{align}
        \|x_{k+1}-x_k\| = \frac{\eta}{L(x_k, \t{R}_k)} \|\grad f(x_k)\| \leq \frac{\eta}{L(x_k, R_k)} \|\grad f(x_k)\| \leq \t{R}_k, \label{eq_descent_nbd}
    \end{align}
    and so \eqref{eq_nonuniform2} can be used.
    That is,
    \begin{align*}
        f(x_{k+1}) - f(x_k) &\leq -\frac{\eta}{L(x_k,\t{R}_k)}\|\grad f(x_k)\|^2 + \frac{L(x_k,\t{R}_k)}{2} \left(\frac{\eta^2}{L(x_k,\t{R}_k)^2}\|\grad f(x_k)\|^2\right), \\
        &= -\frac{\eta}{L(x_k,\t{R}_k)}\left(1-\frac{\eta}{2}\right)\|\grad f(x_k)\|^2,
    \end{align*}
    which gives the desired result.
\end{proof}

Since the LFSO captures all the necessary problem information, the requirements on the stepsize factor $\eta$ are straightforward.

\subsection{Global Convergence}
\begin{theorem} \label{thm_conv_general}
    If Assumption~\ref{ass_smoothness} holds and $0<\eta<2$, then either
    \begin{align*}
        \liminf_{k\to\infty} \|\grad f(x_k)\|=0, \qquad \text{or} \qquad \lim_{k\to\infty} L(x_k, \t{R}_k) = \infty.
    \end{align*}
\end{theorem}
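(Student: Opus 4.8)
The plan is to convert the per-step decrease guaranteed by Lemma~\ref{lem_descent} into a summability statement and then reduce the claimed dichotomy to the elementary fact that the terms of a convergent series tend to zero. First, since $0<\eta<2$, the constant $c\defeq\eta\left(1-\frac{\eta}{2}\right)$ is strictly positive, so Lemma~\ref{lem_descent} shows $\{f(x_k)\}$ is non-increasing. Together with the lower bound $f(x_k)\geq f^*$ from Assumption~\ref{ass_smoothness}, this forces $\{f(x_k)\}$ to converge to some finite limit $f_\infty\geq f^*$.

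Next I would telescope the descent inequality. Rewriting Lemma~\ref{lem_descent} as
\begin{align*}
    \frac{c}{L(x_k,\t{R}_k)}\|\grad f(x_k)\|^2 \leq f(x_k)-f(x_{k+1}),
\end{align*}
summing over $k=0,\ldots,N$, and letting $N\to\infty$ yields the convergent series
\begin{align*}
    \sum_{k=0}^\infty \frac{\|\grad f(x_k)\|^2}{L(x_k,\t{R}_k)} \leq \frac{f(x_0)-f_\infty}{c} < \infty.
\end{align*}

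Finally I would prove the disjunction in its contrapositive form $\neg A\Rightarrow B$. Suppose the first alternative fails; since the gradient norms are nonnegative, this means $\liminf_{k\to\infty}\|\grad f(x_k)\|=\epsilon>0$, so there is an index $K$ with $\|\grad f(x_k)\|\geq \epsilon/2$ for all $k\geq K$. Plugging this uniform lower bound into the convergent series gives
\begin{align*}
    \frac{\epsilon^2}{4}\sum_{k=K}^\infty \frac{1}{L(x_k,\t{R}_k)} \leq \sum_{k=K}^\infty \frac{\|\grad f(x_k)\|^2}{L(x_k,\t{R}_k)} < \infty,
\end{align*}
so $\sum_{k}1/L(x_k,\t{R}_k)$ converges. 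The term test then forces $1/L(x_k,\t{R}_k)\to 0$, equivalently $L(x_k,\t{R}_k)\to\infty$, which is precisely the second alternative.

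This argument is short and essentially routine, so the only real subtlety is the logical packaging rather than any hard estimate: the statement is a disjunction, so I treat it as the implication ``if no subsequence of gradients tends to zero, then $L(x_k,\t{R}_k)$ diverges,'' using that $\liminf\|\grad f(x_k)\|\neq 0$ is equivalent to a uniform positive lower bound on the gradient norm for all large $k$. One should also note that the terms $1/L(x_k,\t{R}_k)$ are well defined (and positive) precisely because a nonzero gradient requires $L(x_k,\t{R}_k)>0$ for the iteration~\eqref{eq_nonuniform_iteration} to be defined; with $1/L(x_k,\t{R}_k)>0$, the term test delivers the full limit $L(x_k,\t{R}_k)\to\infty$ appearing in the theorem, not merely a subsequential one.
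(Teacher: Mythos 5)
Your proof is correct and follows essentially the same route as the paper: the descent inequality from Lemma~\ref{lem_descent} is telescoped against the lower bound $f^*$ to get $\sum_k \|\grad f(x_k)\|^2/L(x_k,\t{R}_k) < \infty$, after which the dichotomy follows from the terms of a convergent series tending to zero. The only difference is cosmetic---the paper concludes $\|\grad f(x_k)\|^2/L(x_k,\t{R}_k)\to 0$ directly, while you pass through summability of $1/L(x_k,\t{R}_k)$ via the contrapositive---which amounts to spelling out the paper's final ``the result follows'' step.
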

\begin{proof}
    From Lemma~\ref{lem_descent}, we have
    \begin{align}
        f(x_k) - f(x_{k+1}) &\geq \frac{\eta (2-\eta)}{2L(x_k,\t{R}_k)}\|\grad f(x_k)\|^2. \label{eq_descent}
    \end{align}
    By summing over $k$ we get
    \begin{align}
        \sum_{k=0}^{\infty} \frac{\|\grad f(x_k)\|^2}{L(x_k,\t{R}_k)} &\leq \frac{2[f(x_0) - f^*]}{\eta (2-\eta)} < \infty, \label{eq_conv1}
    \end{align}
    hence $\lim_{k\to\infty} \|\grad f(x_k)\|^2 / L(x_k,\t{R}_k)=0$, and the result follows.
\end{proof}

Of course, this is not quite a convergence proof, as we need to be concerned about the risk of $L(x_k,\t{R}_k)$ growing unboundedly, which could occur in cases such as $f(x)=\sin(x^2)$ if $\|x_k\|\to\infty$ during the iteration. One simple situation where this behavior does not occur is the following.

\begin{corollary} \label{cor_conv_general}
    Suppose Assumption~\ref{ass_smoothness} holds and $0<\eta<2$. 
    If the sublevel set $\{x\in\R^d : f(x) \leq f(x_0)\}$ is bounded, $L$ is continuous in $x$, and the choices $R_k$ are bounded, then $\lim_{k\to\infty} \|\grad f(x_k)\|=0$.
\end{corollary}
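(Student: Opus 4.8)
The plan is to show that the quantity that Theorem~\ref{thm_conv_general} leaves uncontrolled, namely $L(x_k,\t R_k)$, is in fact uniformly bounded; the conclusion then drops out immediately. First I would record what the extra hypotheses buy us. By Lemma~\ref{lem_descent} and $0<\eta<2$ the sequence $\{f(x_k)\}$ is non-increasing, so every iterate lies in the sublevel set $S\defeq\{x\in\R^d:f(x)\le f(x_0)\}$. By assumption $S$ is bounded, and since $f$ is continuous $S$ is closed, hence compact. Continuity of $\grad f$ on $S$ then gives $G\defeq\max_{x\in S}\|\grad f(x)\|<\infty$, so $\|\grad f(x_k)\|\le G$ for all $k$.

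Next I would reduce the statement to a single uniform bound. If $\bar L\defeq\sup_k L(x_k,\t R_k)<\infty$, then \eqref{eq_conv1} gives $\sum_k\|\grad f(x_k)\|^2\le \bar L\sum_k\|\grad f(x_k)\|^2/L(x_k,\t R_k)<\infty$, and hence $\|\grad f(x_k)\|\to 0$, which is exactly the claim. Since $L$ is non-decreasing in its second argument and continuous in its first, it is enough to bound the enlarged radii: if $\t R_k\le\bar R$ for all $k$, then $L(x_k,\t R_k)\le L(x_k,\bar R)\le\max_{x\in S}L(x,\bar R)<\infty$, the last maximum being finite because $L(\cdot,\bar R)$ is continuous on the compact set $S$.

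It therefore remains to bound $\t R_k$, and this is where the real work lies. From \eqref{eq_Rtilde_defn}, $\t R_k=\max\{R_k,\ \eta\|\grad f(x_k)\|/L(x_k,R_k)\}$. The first term is bounded by hypothesis, say by $R_{\max}$. For the second term the key is a uniform positive lower bound on $L(x_k,R_k)$: combining boundedness of the $R_k$ with the continuity of $L(\cdot,R)$ on the compact set $S$ yields $\inf_k L(x_k,R_k)\ge m$ for some $m$, and provided $m>0$ (which must hold wherever the iteration is well defined, since $L(x_k,R_k)=0$ with $\grad f(x_k)\neq 0$ would make \eqref{eq_Rtilde_defn} undefined) we obtain $\eta\|\grad f(x_k)\|/L(x_k,R_k)\le \eta G/m$, and thus $\t R_k\le\max\{R_{\max},\eta G/m\}\defeq\bar R$.

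The main obstacle is precisely this lower bound, i.e.\ excluding the degenerate alternative $L(x_k,\t R_k)\to\infty$ of Theorem~\ref{thm_conv_general}: it is the only place all three hypotheses genuinely enter, and it is delicate because an LFSO may legitimately vanish on affine regions, which would let $\t R_k$ blow up. Should the naive lower bound on $L(x_k,R_k)$ degenerate, I would control the enlargement through a coercivity estimate from boundedness below: whenever the enlargement is active we have $L(x_k,R_k)R_k<\eta\|\grad f(x_k)\|$ and $B(x_k,R_k)$ is crossed monotonically downhill, so evaluating \eqref{eq_nonuniform2} at $y=x_k-R_k\grad f(x_k)/\|\grad f(x_k)\|$ and using $f(y)\ge f^*$ and $f(x_k)\le f(x_0)$ gives $R_k\|\grad f(x_k)\|\le 2(f(x_0)-f^*)/(2-\eta)$. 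This caps the enlargement relative to the gradient; together with the summability $\sum_k\|\grad f(x_k)\|\,\|x_{k+1}-x_k\|<\infty$ from Lemma~\ref{lem_descent} and the uniform continuity of $\grad f$ on the compact set $S$, it lets one upgrade the $\liminf$ of Theorem~\ref{thm_conv_general} to a full limit, closing the remaining gap.
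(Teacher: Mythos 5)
Your main chain is exactly the paper's proof: monotonicity from Lemma~\ref{lem_descent} keeps the iterates in the compact sublevel set $S$; boundedness of $R_k$, $\|\grad f(x_k)\|$ and $L(x_k,R_k)$ then makes $\t{R}_k$ bounded; continuity of $L(\cdot,\bar{R})$ on $S$ plus monotonicity in $R$ bounds $L(x_k,\t{R}_k)$; and \eqref{eq_conv1} finishes. Notably, the paper asserts the step ``$R_k$, $\|\grad f(x_k)\|$, $L(x_k,R_k)$ bounded $\Rightarrow$ $\t{R}_k$ bounded'' outright, which silently uses the very thing you flag as the main obstacle: from \eqref{eq_Rtilde_defn}, bounding $\t{R}_k$ needs $L(x_k,R_k)$ bounded \emph{below} away from zero, not above. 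So you have correctly isolated the one delicate point, and you probe it more honestly than the paper does. The problem is that neither of your two patches actually closes it.

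Concretely: (i) continuity of $L(\cdot,R)$ on the compact set $S$ gives finite maxima, hence upper bounds, but no uniform positive lower bound --- $\min_{x\in S}L(x,R)$ can be zero, and since the $R_k$ are only bounded above and may shrink to $0$ while $L$ is merely non-decreasing in $R$, one can have $L(x_k,R_k)\to 0$ with every individual term positive; per-iteration well-definedness of \eqref{eq_Rtilde_defn} is a pointwise statement and does not yield $\inf_k L(x_k,R_k)\geq m>0$. (ii) Your fallback coercivity estimate is correct as an inequality (taking $y$ at distance $(1-\delta)R_k$ and letting $\delta\to 0$), but it bounds the product $R_k\|\grad f(x_k)\|$, whereas the quantity that must be controlled is $\t{R}_k = \eta\|\grad f(x_k)\|/L(x_k,R_k)$; no bound on $R_k\|\grad f(x_k)\|$ prevents $\t{R}_k\to\infty$ when $L(x_k,R_k)\to 0$. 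The summability $\sum_k \|\grad f(x_k)\|\,\|x_{k+1}-x_k\| \leq 2(f(x_0)-f^*)/(2-\eta)$ does follow from Lemma~\ref{lem_descent}, and the Zoutendijk-style upgrade it enables handles the case $\liminf_k\|\grad f(x_k)\|=0$; but in the complementary regime $\|\grad f(x_k)\|\geq\epsilon$ for all large $k$, that same summability makes $(x_k)$ Cauchy, so $x_k\to\bar{x}$ with $\grad f(\bar{x})\neq 0$, and nothing in your argument rules this out: near a point where $L(\bar{x},R)\to 0$ as $R\to 0^{+}$ yet $\grad f(\bar{x})\neq 0$, adversarially small $R_k$ can inflate $\t{R}_k$, blow up $L(x_k,\t{R}_k)$, and stall the iteration while all the corollary's stated hypotheses hold. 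A clean repair is to add what the paper implicitly assumes, e.g.\ $R_k\geq R_{\min}>0$ together with $\inf_{x\in S}L(x,R_{\min})>0$ (or positivity and joint continuity of $L$ with the $R_k$ confined to a compact subset of $(0,\infty)$); then compactness gives the uniform $m>0$ and your first argument goes through verbatim.
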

\begin{proof}
    If $R_k \leq R$ for all $k$, then $L(x_k,R_k) \leq L(x_k,R)$.
    From Lemma~\ref{lem_descent}, we know $x_k \in \{x\in\R^d : f(x) \leq f(x_0)\}$ for all $k$.
    Then we know that $R_k$, $\|\grad f(x_k)\|$ and $L(x_k,R_k)$ are all bounded, and so $\t{R}_k$ is bounded too.
    Finally, this means $L(x_k,\t{R}_k)$ is bounded, so $\lim_{k\to\infty} \|\grad f(x_k)\|^2/L(x_k,\t{R}_k)=0$ (derived in the proof of Theorem~\ref{thm_conv_general}) gives the result.
\end{proof}

In fact, under the assumptions of Corollary~\ref{cor_conv_general}, \eqref{eq_conv1} actually gives us the common $\bigO(\epsilon^{-2})$ worst-case iteration complexity rate\footnote{i.e.,~the maximum number of iterations before $\|\grad f(x_k)\|\leq \epsilon$ is first attained.} for 
nonconvex problems (e.g.,~\cite[Chapter 2]{Cartis2022}).
We note that these assumptions are weaker than assuming $\Lf$-smoothness everywhere, as we only care about the LFSO in the initial sublevel set.

\begin{remark}
    The above (in particular \eqref{eq_descent_nbd}) still works if we replace $\|\grad f(x_k)\|$ in \eqref{eq_Rtilde_defn} with any upper bound $C_k \geq \|\grad f(x_k)\|$. This will be useful in Section~\ref{sec_linreg}.
\end{remark}

In the case where $f$ satisfies the Polyak-\L{}ojasiewicz (PL) inequality with parameter $\mu>0$---for example if $f$ is $\mu$-strongly convex---we can achieve convergence even in some cases where $L(x_k, \t{R}_k)\to \infty$, provided it does not increase too quickly.

\begin{theorem} \label{thm_conv_pl}
    Suppose Assumption~\ref{ass_smoothness} holds and $0<\eta<2$.
    If $f$ is $\mu$-PL, that is
    \begin{align}
        f(x) - f^* \leq \frac{1}{2\mu}\|\grad f(x)\|^2, \qquad \forall x\in \R^d, \label{eq_pl}
    \end{align}
    and 
    \begin{align*}
        \sum_{k=0}^{\infty} \frac{1}{L(x_k, \t{R}_k)} = \infty,
    \end{align*}
    then $\lim_{k\to\infty} f(x_k) = f^*$.
\end{theorem}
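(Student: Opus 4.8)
The plan is to convert the per-iteration decrease from Lemma~\ref{lem_descent} into a linear-type recursion for the optimality gap $a_k \defeq f(x_k) - f^*$, and then show this gap is forced to zero by the divergence of $\sum_k 1/L(x_k,\t{R}_k)$. First I would subtract $f^*$ from both sides of Lemma~\ref{lem_descent}, abbreviating $L_k \defeq L(x_k,\t{R}_k)$, to get
\[
    a_{k+1} \leq a_k - \frac{\eta(2-\eta)}{2L_k}\|\grad f(x_k)\|^2.
\]
Applying the PL inequality \eqref{eq_pl} in the form $\|\grad f(x_k)\|^2 \geq 2\mu a_k$ then yields the contraction
\[
    a_{k+1} \leq \left(1 - \frac{\mu\eta(2-\eta)}{L_k}\right)a_k \eqcolon (1 - c_k)\,a_k,
\]
where $c_k \defeq \mu\eta(2-\eta)/L_k \geq 0$, using that $0<\eta<2$ makes $\eta(2-\eta)>0$.

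Before unrolling this bound I would dispose of the degenerate cases, which is the only point requiring care. Since $f$ is bounded below by $f^*$ we have $a_{k+1}\geq 0$ for every $k$; combined with $a_{k+1}\leq (1-c_k)a_k$ this forces $1-c_k \geq 0$, i.e.\ $c_k \leq 1$, whenever $a_k>0$. Moreover Lemma~\ref{lem_descent} gives $a_{k+1}\leq a_k$, so $(a_k)$ is non-increasing and nonnegative. Consequently, if $a_k=0$ for some $k$, or if $c_k=1$ for some $k$ (which yields $a_{k+1}\leq 0$ and hence $a_{k+1}=0$), then $a_j=0$ for all subsequent $j$ and the conclusion holds trivially. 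It therefore suffices to treat the remaining case in which $a_k>0$ and $c_k\in[0,1)$ for all $k$.

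In that case I would iterate the recursion to obtain $a_{k+1} \leq a_0 \prod_{j=0}^{k}(1-c_j)$, and apply the elementary bound $1 - c_j \leq e^{-c_j}$ to get
\[
    a_{k+1} \leq a_0 \exp\!\left(-\sum_{j=0}^{k}c_j\right) = a_0 \exp\!\left(-\mu\eta(2-\eta)\sum_{j=0}^{k}\frac{1}{L_j}\right).
\]
Because $\mu\eta(2-\eta)>0$ and $\sum_{j=0}^{\infty} 1/L_j = \infty$ by hypothesis, the exponent tends to $-\infty$, so $a_k\to 0$, that is $\lim_{k\to\infty} f(x_k)=f^*$. The genuinely substantive mechanism is the divergence of $\sum_k 1/L_k$ driving the telescoped exponential to zero; the main obstacle is the minor book-keeping that guarantees $c_k\le 1$ so that the infinite product behaves monotonically, which I would resolve exactly as above via the boundedness of $f$ below.
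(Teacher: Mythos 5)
Your proof is correct, and up to the key contraction it is exactly the paper's argument: both subtract $f^*$ from Lemma~\ref{lem_descent}, apply \eqref{eq_pl}, and arrive at $f(x_{k+1})-f^* \leq \bigl(1-\mu\eta(2-\eta)/L(x_k,\t{R}_k)\bigr)(f(x_k)-f^*)$, which is \eqref{eq_linear_rate}. The difference is in the endgame: the paper closes by citing \cite[Prop A.4.3]{Bertsekas2015}, a general fact asserting that a nonnegative sequence satisfying $a_{k+1}\leq(1-c_k)a_k$ with $c_k\geq 0$ and $\sum_k c_k = \infty$ converges to zero, whereas you prove that fact from scratch: unrolling the product, applying $1-c_j \leq e^{-c_j}$, and letting the divergent sum drive the exponential to zero. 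Your edge-case bookkeeping is both necessary for this self-contained route and handled correctly --- the observation that $a_{k+1}\geq 0$ together with $a_{k+1}\leq(1-c_k)a_k$ forces $c_k\leq 1$ whenever $a_k>0$ is a clean way to keep the product monotone without assuming any a priori lower bound on $L(x_k,\t{R}_k)$, and your treatment of the degenerate cases $a_k=0$ and $c_k=1$ (finite termination of the gap) is exactly the bookkeeping the cited proposition silently absorbs. What each approach buys: the paper's citation is shorter and defers to a standard result; your version is self-contained, makes the mechanism transparent, and as a by-product exhibits the explicit decay estimate $f(x_k)-f^* \leq (f(x_0)-f^*)\exp\bigl(-\mu\eta(2-\eta)\sum_{j<k} 1/L(x_j,\t{R}_j)\bigr)$, which quantifies the convergence speed in terms of the partial sums rather than merely asserting the limit. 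One shared implicit assumption worth being aware of (not a gap specific to you, since the paper makes it too): the argument takes $L(x_k,\t{R}_k)>0$ for granted, which is needed anyway for the iteration \eqref{eq_nonuniform_iteration} to be well defined.
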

\begin{proof}
    From Lemma~\ref{lem_descent} and \eqref{eq_pl}, we get
    \begin{align}
        f(x_{k+1})-f^* &\leq f(x_k) - f^* -\frac{\eta}{L(x_k,\t{R}_k)}\left(1-\frac{\eta}{2}\right)\|\grad f(x_k)\|^2, \\
        &\leq \left[1 - \frac{\mu \eta(2-\eta)}{L(x_k, \t{R}_k)}\right](f(x_k)-f^*). \label{eq_linear_rate}
    \end{align}
    By summing over $k$ we get, by the assumption in the theorem statement, 
    \begin{align}
        \sum_{k=0}^{\infty} \frac{\mu \eta(2-\eta)}{L(x_k, \t{R}_k)} = \infty,
    \end{align}
    and so the result follows from \cite[Prop A.4.3]{Bertsekas2015}.
\end{proof}

We should note that if $f$ is strongly convex then the sublevel set $\{x : f(x) \leq f(x_0)\}$ is bounded \cite[Theorem 3.3.14]{Wright2018}, and so Corollary~\ref{cor_conv_general} guarantees convergence (provided the $R_k$ are bounded).

\subsection{Local Convergence Rate}
Encouraged by the result in Theorem~\ref{thm_conv_pl}, we now consider the local convergence rate of Algorithm~\ref{alg_lfso_gd} to non-degenerate local minimizers.

\begin{theorem}
    Suppose Assumption~\ref{ass_smoothness} holds and $0<\eta<2$.
    If $f$ is also $C^2(\R^d)$ and $x^*$ is a local minimizer of $f$ with $\lambda_{\min}(\grad^2 f(x^*))>0$, $L$ is continuous in $x$, $R_k>0$ for all $k$, and $x_0$ is sufficiently close to $x^*$, then $x_k\to x^*$ R-linearly.
\end{theorem}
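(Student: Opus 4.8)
The plan is to establish R-linear convergence by showing that near a non-degenerate minimizer, the LFSO $L(x_k,\t{R}_k)$ stays bounded above and below by positive constants, so that the linear contraction inequality \eqref{eq_linear_rate} from the PL case applies with a uniform rate, and then convert the resulting linear decrease in function values into R-linear convergence of the iterates themselves. Since $\grad^2 f(x^*)\succ 0$, by continuity there is a ball $B(x^*,\delta)$ and constants $0<m\leq M$ with $m I \preceq \grad^2 f(x)\preceq M I$ on this ball, so $f$ is locally $m$-strongly convex (hence $m$-PL) and locally $M$-smooth there.

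\textbf{Step 1: Local boundedness of the LFSO.} First I would use continuity of $L$ in $x$ together with the fact that, for $x_k$ near $x^*$, the radius $\t{R}_k$ is controlled. Because $L(x_k,R_k)\geq L_{\min}>0$ is bounded below near $x^*$ (using Lemma~\ref{lem_c2}-type lower bounds from $\lambda_{\min}(\grad^2 f(x^*))>0$) and $\|\grad f(x_k)\|\to 0$ as $x_k\to x^*$, definition \eqref{eq_Rtilde_defn} gives $\t{R}_k=\max\{R_k,\ \eta\|\grad f(x_k)\|/L(x_k,R_k)\}$; the second term is small, so provided the chosen $R_k$ are bounded (which I would need to assume or arrange, e.g. $R_k$ bounded) we get $\t{R}_k$ bounded, and then continuity of $L$ yields $L(x_k,\t{R}_k)\leq L_{\max}$ for some constant on a neighborhood of $x^*$.

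\textbf{Step 2: Invariance of a neighborhood and a uniform contraction.} I must show that if $x_0$ is close enough to $x^*$, then all iterates remain in the neighborhood where the bounds from Step~1 hold; this is where the descent Lemma~\ref{lem_descent} is essential. Using the local strong convexity, I would apply the PL inequality locally and the reasoning of \eqref{eq_linear_rate} to obtain
\begin{align*}
    f(x_{k+1})-f^* \leq \left[1-\frac{m\,\eta(2-\eta)}{L_{\max}}\right](f(x_k)-f^*) =: \rho\,(f(x_k)-f^*),
\end{align*}
with $\rho\in[0,1)$ a fixed constant. An inductive argument shows that, starting close enough to $x^*$ (so that the initial sublevel set intersected with $B(x^*,\delta)$ is positively invariant), the iterates never escape the good neighborhood; this requires relating $\|x_k-x^*\|$ to $f(x_k)-f^*$ via the local quadratic growth $f(x)-f^*\geq \tfrac{m}{2}\|x-x^*\|^2$ and upper bound $f(x)-f^*\leq \tfrac{M}{2}\|x-x^*\|^2$.

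\textbf{Step 3: From Q-linear function decrease to R-linear iterate convergence.} Iterating the contraction gives $f(x_k)-f^*\leq \rho^k (f(x_0)-f^*)$. Combining this with the quadratic growth lower bound yields
\begin{align*}
    \|x_k-x^*\| \leq \sqrt{\tfrac{2}{m}\,(f(x_k)-f^*)} \leq \sqrt{\tfrac{2}{m}(f(x_0)-f^*)}\;\rho^{k/2},
\end{align*}
which is exactly R-linear convergence of $x_k\to x^*$ with rate $\sqrt{\rho}$. The main obstacle is Step~2: one must verify that the iterates genuinely stay within the neighborhood on which the spectral bounds and the boundedness of $L(x_k,\t{R}_k)$ are valid, since the contraction constant $\rho$ itself depends on $L_{\max}$, which is only guaranteed on that neighborhood. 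This circularity is resolved by the standard trick of choosing the initial sublevel set small enough that quadratic growth keeps every iterate inside $B(x^*,\delta)$, using monotone decrease of $f(x_k)$ from the descent lemma as the safeguard.
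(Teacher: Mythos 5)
Your overall strategy is exactly the paper's: use $\grad^2 f(x^*)\succ 0$ to get local strong convexity (hence local PL and quadratic growth), bound the LFSO above by a constant $L_{\max}$ on a compact neighborhood, run the contraction \eqref{eq_linear_rate} with a uniform rate $\rho = 1-\mu\eta(2-\eta)/L_{\max}$, and convert $f(x_k)-f^*\leq\rho^k(f(x_0)-f^*)$ into $\|x_k-x^*\|\leq C\rho^{k/2}$ via quadratic growth. Steps 1 and 3 and the final rate $\sqrt{\rho}$ match the paper's proof essentially line for line.

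The genuine soft spot is your Step 2. You claim the circularity is resolved ``by choosing the initial sublevel set small enough that quadratic growth keeps every iterate inside $B(x^*,\delta)$, using monotone decrease as the safeguard,'' but monotone decrease plus quadratic growth alone cannot do this: the growth bound $f(x)-f^*\geq\frac{m}{2}\|x-x^*\|^2$ is only valid for $x$ \emph{already inside} the ball, and since $x^*$ is merely a local minimizer, $f(x_{k+1})\leq f(x_k)$ does not preclude $x_{k+1}$ jumping clear out of $B(x^*,\delta)$ in a single step to some other low region of $f$. What closes the loop is an explicit one-step length bound, and the paper's proof is built around it: from the LFSO definition \eqref{eq_nonuniform2} applied to nearby $y$, local strong convexity gives $L(x,R)\geq\mu$ on the ball (note this comes from the definition, not from Lemma~\ref{lem_c2}, which constructs one particular LFSO --- the $L$ in the theorem is arbitrary); then on an inner ball $B(x^*,R_2)$ with $R_2\leq R_1/2$ chosen so that $\|\grad f(x)\|\leq \mu R_1/(2\eta)$ there, the step satisfies $\|x_{k+1}-x_k\|\leq \eta\|\grad f(x_k)\|/\mu\leq R_1/2$, hence $x_{k+1}\in B(x^*,R_1)$; only then do quadratic growth and the sublevel condition $f(x_{k+1})-f^*\leq\mu R_2^2/2$ pull $x_{k+1}$ back into $B(x^*,R_2)$, and induction proceeds. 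You have the needed ingredients in your Step 1 ($L\geq L_{\min}$ and small gradients near $x^*$), so this is a fixable omission rather than a wrong approach, but as written your safeguard does not suffice. One shared caveat: both your $L_{\max}$ and the paper's $L_{\max}:=\max_{x\in B(x^*,R_1)}\max_{0\leq R\leq R_1}L(x,R)$ require $R_k$ (hence $\t{R}_k$) to be bounded by the neighborhood radius, whereas the theorem statement only assumes $R_k>0$; you flag this extra assumption explicitly, which the paper leaves implicit --- a point in your favor.
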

\begin{proof}
    Since $f$ is $C^2$, there exists a neighborhood $B(x^*,R_1)$ within which $f$ is $\mu$-strongly convex for $\mu:=\lambda_{\min}(\grad^2 f(x^*))/2$.
    Given this neighborhood, define 
    \begin{align*}
        L_{\max} := \max_{x\in B(x^*,R_1)} \: \max_{0 \leq R\leq R_1} L(x, R).
    \end{align*}
    Hence, whenever $\|x_k-x^*\| \leq R_1$, we have $L(x_k, R_k) \leq L_{\max}$.
    Strong convexity also gives
    \begin{align*}
        f(y) - f(x) - \grad f(x)^T (y-x) \geq \frac{\mu}{2}\|y-x\|^2,
    \end{align*}
    for any $x,y\in B(x^*, R_1)$, and so it follows that $L(x, R) \geq \mu$ for all $x\in B(x^*,R_1)$ and $R>0$.

    Since $f$ is $C^1$, there exists\footnote{Since $\grad f$ is continuous, there is a ball $B(x^*,R_2')$ such that $\|\grad f(x)\| \leq \epsilon$ for all $x\in B(x^*,R_2')$ (with $\epsilon$ arbitrary, such as $\mu R_1 / (2\eta)$ as above). Then take $R_2 = \min(R_1/2, R_2')$.} an $R_2 \leq R_1/2$ such that $\|\grad f(x)\| \leq \mu R_1 / (2\eta)$ for all $x\in B(x^*, R_2)$.
    Then if $\|x_k - x^*\| \leq R_2$, we have
    \begin{align*}
        \|x_{k+1}-x^*\| \leq \|x_k-x^*\| + \frac{\eta}{L(x_k, \t{R}_k)}\|\grad f(x_k)\| \leq R_2 + \frac{R_1}{2} \leq R_1.
    \end{align*}

    Now, for any $x\in B(x^*,R_1)$, by
    strong convexity in this region, it follows that if $f(x)-f(x^*) \leq \mu R_2^2/2$ then $\|x-x^*\| \leq R_2$.
    Given this, suppose that $x_0$ is sufficiently close to $x^*$ in the sense that $x_0\in B(x^*,R_1)$ and $f(x_0)-f(x^*) \leq \mu R_2^2/2$.
    We then have that $x_0\in B(x^*,R_2)$ and so $x_1 \in  B(x^*,R_1)$ from the above.
    Lemma~\ref{lem_descent} implies that $f(x_1) \leq f(x_0)$ and so $x_1$ also satisfies $f(x_1)-f(x^*) \leq \mu R_2^2/2$, which in turn implies $x_1\in B(x^*,R_2)$.
    
    By induction, we conclude that $x_k\in B(x^*,R_1)$ for all $k$ (and also that $x_k\in B(x^*,R_2)$ for all $k$).
    Then, by the same reasoning as \eqref{eq_linear_rate}, since \eqref{eq_pl} holds in $B(x^*,R_1)$ and $L(x_k, \t{R}_k) \leq L_{\max}$, we have
    \begin{align*}
        f(x_{k+1}) - f(x^*) \leq \left(1-\frac{\mu \eta (2-\eta)}{L_{\max}}\right)(f(x_k)-f(x^*)),
    \end{align*}
    and we have a linear convergence rate of $f(x_k)\to f(x^*)$.

    Finally, we use the strong convexity result that \cite[Lemma 1.2.3 \& Theorem 2.1.7]{Nesterov2004}
    \begin{align*}
        \frac{\mu}{2} \|x-x^*\|^2 \leq f(x) - f(x^*) \leq \frac{L_{\max}}{2} \|x-x^*\|^2,
    \end{align*}
    to conclude that
    \begin{align*}
        \|x_k-x^*\|^2 \leq \frac{2}{\mu}(f(x_k)-f(x^*)) &\leq \frac{2}{\mu} \left(1-\frac{\mu \eta (2-\eta)}{L_{\max}}\right)^k (f(x_0)-f(x^*)) \\
        & \leq \frac{L_{\max}}{\mu} \left(1-\frac{\mu \eta (2-\eta)}{L_{\max}}\right)^k \|x_0-x^*\|^2,
    \end{align*}
    and so $x_k \to x^*$ R-linearly.
\end{proof}

\section{Global Rates for Compositions of Functions} \label{sec_composition}
Motivated by problems with very flat minima, we now consider the performance of Algorithm~\ref{alg_lfso_gd} when the objective function has a specific compositional structure.

\begin{assumption} \label{ass_composition}
    The objective is $f(x) = h(g(x))$ where: 
    \begin{itemize}
        \item The function $g:\R^d\to\R$ is twice continuously differentiable, $\grad g$ is $\Lg$-Lipschitz continuous, and $g$ is $\mu_g$-PL with minimizer $x^*$ satisfying $g(x^*)=0$
        \item The function $h:[0,\infty)\to\R$ is twice continuously differentiable, strictly increasing, strictly convex, $h''$ is non-decreasing, and $h'(t) = \Theta(t^p)$ as $t\to 0^{+}$, for some $p\geq 1$
    \end{itemize}
\end{assumption}

We note that the assumptions on $g$ imply that 
\begin{align}
    2\mu_g\ g(x) \leq \|\grad g(x)\|^2 \leq 2\Lg\ g(x), \qquad \forall x\in\R^d, \label{eq_PL_Lsmooth2}
\end{align}
where the right-hand inequality follows from \cite[eq.~(4.1.3)]{Wright2018}, and that $x^*$ (the minimizer of $g$) is also a minimizer for $f$ with $f(x^*)=h(0)$.

The function $g$ could be, for example $g(x)=\|Ax-b\|_2^2$ for some consistent linear system $Ax=b$, but in general need not be convex.
We are most interested in Assumption~\ref{ass_composition} when the outer function $h$ is very flat near 0, such as $h(t)=t^p$ for $p>2$, although other functions such as $h(t)=t$ are also allowed.
In general, this means that $f$ is not PL, as shown by the case $g(x)=x^2$ and $h(t)=t^2$.

Even though $f$ is not PL, we will show that the iterates generated by  Algorithm~\ref{alg_lfso_gd} exhibit a \emph{global} linear convergence rate, which is typically only seen for PL functions (for standard GD-type methods).
To show this, we will need the following technical results.

\begin{lemma} \label{lem_pl_modified}
Suppose Assumption~\ref{ass_composition} holds and we perform the iteration
\begin{align*}
    x_{k+1} = x_k - \eta_k \grad g(x_k),
\end{align*}
where there exists $\epsilon \in (0, 1/\Lg]$ such that $\epsilon \leq \eta_k \leq 2/\Lg - \epsilon$ for all $k$.
Then
\begin{align*}
    g(x_k) \leq \left(1-\mu_g\epsilon(2-\Lg\epsilon)\right)^k g(x_0).
\end{align*}
\end{lemma}
\begin{proof}
This is a generalization of \cite[Theorem 1]{Karimi2016}, which proves the case where $\epsilon=1/\Lg$ (i.e.~$\eta_k = 1/\Lg$ for all $k$). 
Since $g$ is $\Lg$-smooth, we have
\begin{align*}
    g(x_{k+1}) &\leq g(x_k) - \eta_k \|\grad g(x_k)\|^2 + \frac{\Lg  \eta_k^2}{2} \|\grad g(x_k)\|^2, \\
    &= g(x_k) - \eta_k \left(1 - \frac{\Lg \eta_k}{2}\right)\|\grad g(x_k)\|^2, \\
    &\leq g(x_k) - 2\mu_g \eta_k \left(1-\frac{\Lg\eta_k}{2}\right) g(x_k),
\end{align*}
where the last inequality holds from \eqref{eq_PL_Lsmooth2}, and so
\begin{align*}
    g(x_{k+1}) &\leq \left(1 - 2\mu_g\eta_k \left(1-\frac{\Lg\eta_k}{2}\right)\right)g(x_k),
\end{align*}
or
\begin{align*}
    g(x_k) &\leq \left(1 - \mu_g\eta_k \left(2-\Lg\eta_k\right)\right)^k g(x_0).
\end{align*}
The term $\mu_g \eta_k (2-\Lg\eta_k)$ is positive for all $\eta_k \in (0, 2/\Lg)$ and  maximized for $\eta_k=1/\Lg$, in which case $\mu \eta_k (2-\Lg\eta_k) = \mu_g/\Lg \leq 1$.
Hence, if $\eta_k \in [\epsilon, 2/\Lg - \epsilon]$, then
\begin{align*}
    0 < \mu_g \epsilon (2-\Lg\epsilon) \leq \mu_g\eta_k (2-\Lg\eta_k) \leq \mu_g/\Lg \leq 1.
\end{align*}
The result then follows.
\end{proof}

\begin{corollary} \label{cor_composition}
    Suppose the assumptions of Lemma~\ref{lem_pl_modified} hold.
    Then $\|\grad f(x_k)\|\to 0$, R-linearly.
\end{corollary}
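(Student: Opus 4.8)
The plan is to bootstrap the geometric decay of $g(x_k)$ already established in Lemma~\ref{lem_pl_modified} into geometric decay of $\|\grad f(x_k)\|$ via the chain rule. Write $\rho := 1 - \mu_g \epsilon(2 - \Lg\epsilon)$, so that Lemma~\ref{lem_pl_modified} gives $g(x_k) \leq \rho^k g(x_0)$ with $0 \leq \rho < 1$ (the bound $\rho < 1$ being exactly what was verified at the end of that proof), and in particular $g(x_k) \to 0$ geometrically. Note that the left inequality of \eqref{eq_PL_Lsmooth2}, namely $2\mu_g g(x) \leq \|\grad g(x)\|^2$, forces $g(x) \geq 0$ everywhere, so that $h'(g(x))$ is well-defined on the domain $[0,\infty)$ of $h$, and $h' \geq 0$ since $h$ is increasing.

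First I would apply the chain rule to write $\grad f(x_k) = h'(g(x_k))\,\grad g(x_k)$, whence $\|\grad f(x_k)\| = h'(g(x_k))\,\|\grad g(x_k)\|$, and then bound the two factors separately. For the gradient factor, the right inequality of \eqref{eq_PL_Lsmooth2} gives $\|\grad g(x_k)\| \leq \sqrt{2\Lg\, g(x_k)} \leq \sqrt{2\Lg\, g(x_0)}\,\rho^{k/2}$, which already decays R-linearly with rate $\sqrt{\rho}$. For the $h'$ factor, the assumption $h'(t) = \Theta(t^p)$ as $t\to 0^+$ supplies constants $c,\delta > 0$ with $h'(t) \leq c\,t^p$ on $[0,\delta]$; since $g(x_k)\to 0$, there is an index $K_0$ beyond which $g(x_k) \leq \delta$, and then $h'(g(x_k)) \leq c\,(g(x_k))^p \leq c\,(g(x_0))^p\, \rho^{pk}$.

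Multiplying the two estimates yields $\|\grad f(x_k)\| \leq C\,(\rho^{p+1/2})^k$ for all $k \geq K_0$, where $C$ collects the constants; since $\rho \in [0,1)$ and $p \geq 1$, the contraction factor $\rho^{p+1/2}$ again lies in $[0,1)$. Enlarging the leading constant to absorb the finitely many iterates with $k < K_0$ then produces a bound $\|\grad f(x_k)\| \leq M\,(\rho^{p+1/2})^k$ valid for every $k$, which is precisely R-linear convergence.

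The computation is essentially routine, and the one point needing care is that the power-law estimate $h'(t)=\Theta(t^p)$ is only assumed near $t=0^+$, so the clean bound on $h'(g(x_k))$ holds only after $g(x_k)$ has entered $[0,\delta]$. This is harmless, because $g(x_k)\to 0$ guarantees entry in finitely many steps and R-linear convergence is an asymptotic property insensitive to a finite initial segment. Indeed, even discarding the $\Theta(t^p)$ refinement, continuity of $h'$ together with $g(x_k)\to 0$ already makes $h'(g(x_k))$ bounded, so R-linearity follows from the $\sqrt{\rho}$ decay of $\|\grad g(x_k)\|$ alone; the power-law assumption serves only to sharpen the rate from $\rho^{1/2}$ to $\rho^{p+1/2}$.
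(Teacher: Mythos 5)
Your proposal is correct, and it shares the paper's skeleton --- chain rule $\grad f(x_k) = h'(g(x_k))\grad g(x_k)$, the bound $\|\grad g(x_k)\| \leq \sqrt{2\Lg\, g(x_k)}$ from \eqref{eq_PL_Lsmooth2}, and the geometric decay $g(x_k)\leq \rho^k g(x_0)$ from Lemma~\ref{lem_pl_modified} --- but handles the $h'$ factor differently, and this difference is substantive. The paper simply bounds $h'(g(x_k)) \leq h'(g(x_0))$ using monotonicity of $h'$ (from convexity of $h$) together with $g(x_k)\leq g(x_0)$, which needs no asymptotics, holds from $k=0$, and yields the explicit rate $\rho^{1/2}$ with constant $\sqrt{2\Lg\,g(x_0)}\,h'(g(x_0))$. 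You instead invoke $h'(t)=\Theta(t^p)$ near $0$ to make the $h'$ factor itself decay like $\rho^{pk}$, sharpening the R-linear rate to $\rho^{p+1/2}$, at the cost of a burn-in index $K_0$ and an absorbed constant; this is a strictly stronger conclusion than the paper proves, and it is the natural precursor to the flat-minima discussion in Section~\ref{sec_composition}. Your fallback observation (boundedness of $h'(g(x_k))$ via continuity plus $g(x_k)\to 0$ suffices for rate $\rho^{1/2}$) is essentially the paper's argument, except the paper gets the bound from monotonicity, which gives the cleaner explicit constant $h'(g(x_0))$ without appealing to convergence. Two cosmetic points: the $\Theta(t^p)$ hypothesis is stated as $t\to 0^+$, so the bound $h'(t)\leq c\,t^p$ is a priori only on $(0,\delta]$; it extends to $t=0$ because $h\in C^2$ forces $h'(0)=\lim_{t\to 0^+}h'(t)=0$ (worth one line, though the paper glosses this identically in Theorem~4.6). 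And in the degenerate case $\rho=0$ (possible only if $\mu_g\epsilon(2-\Lg\epsilon)=1$) the sequence terminates at the minimizer after one step, so any rate in $(0,1)$ certifies R-linearity.
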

\begin{proof}
    Since $\grad f(x) = h'(g(x)) \grad g(x)$, we use \eqref{eq_PL_Lsmooth2} to get
    \begin{align*}
        \|\grad f(x_k)\| = h'(g(x_k)) \cdot \|\grad g(x_k)\| \leq \sqrt{2 \Lg} \cdot h'(g(x_k)) \cdot \sqrt{g(x_k)},
    \end{align*}
    noting that $h'(g(x_k))>0$ since $h$ is strictly increasing.
    Then by Lemma~\ref{lem_pl_modified}, we get $g(x_k) \leq g(x_0)$ and so
    \begin{align*}
        \|\grad f(x_k)\| \leq \sqrt{2\Lg} \cdot h'(g(x_0)) \left(1-\mu_g\epsilon(2-\Lg\epsilon)\right)^{k/2} \sqrt{g(x_0)},
    \end{align*}
    where we have used that $h'$ is increasing (since $h$ is increasing and convex), and so $\|\grad f(x_k)\|\to 0$ R-linearly with rate $(1-\mu_g\epsilon(2-\Lg\epsilon))^{1/2}$.
\end{proof}

Now for the objective given by Assumption \ref{ass_composition}, we have
\begin{align*}
    \grad f(x) &= h'(g(x)) \grad g(x) \quad \text{and} \quad \grad^2 f(x) = h''(g(x)) \grad g(x) \grad g(x)^T + h'(g(x)) \grad^2 g(x),
\end{align*}
and so for the purposes of calculating $L(x,R)$ we estimate
\begin{align*}
    \max_{\|s\|\leq R} \|\grad^2 f(x+s)\| &\leq h''(g(x+s)) \|\grad g(x+s)\|^2 + h'(g(x+s)) \Lg.
\end{align*}
Using \eqref{eq_PL_Lsmooth2} and $\|\grad g(x+s)\| \leq \Lg\|s\| + \|\grad g(x)\|$ we get the LFSO
\begin{align}
    L(x, R) = h''\left(\frac{[\Lg R + \|\grad g(x)\|]^2}{2\mu_g}\right)[\Lg R + \|\grad g(x)\|]^2 + h'\left(\frac{[\Lg R + \|\grad g(x)\|]^2}{2\mu_g}\right) \Lg. \label{eq_lfso_composition}
\end{align}
Note that $L$ is non-decreasing in $R$ follows since $h'$ and $h''$ are non-decreasing, and $h''>0$ (Assumption~\ref{ass_composition}).

By observing the form of $L(x,R)$ \eqref{eq_lfso_composition}, it is natural to consider $R_k=\|\grad g(x_k)\|$, since these two quantities (i.e.~$R$ and $\|\grad g(x)\|$) always appear together and it is a computable/known value. 
It is this choice that will give the fast global convergence rate of Algorithm~\ref{alg_lfso_gd}.

\begin{lemma} \label{lem_composition_Rk}
    Suppose Assumption~\ref{ass_composition} holds and we choose $R_k = \|\grad g(x_k)\|$ and any $\eta>0$ in Algorithm~\ref{alg_lfso_gd} (with LFSO \eqref{eq_lfso_composition}).
    Then $\t{R}_k = D_k \|\grad g(x_k)\|$ for some $D_k \in [1, \max(1, \eta/\Lg)]$.
\end{lemma}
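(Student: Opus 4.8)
The plan is to unfold the definition \eqref{eq_Rtilde_defn} of $\t{R}_k$ with $R_k = \|\grad g(x_k)\|$ and bound the two terms in the maximum separately. Writing $G_k := \|\grad g(x_k)\|$ for brevity, the lower bound $D_k \geq 1$ is immediate: since $\t{R}_k$ is a maximum whose first argument is $R_k = G_k$, we always have $\t{R}_k \geq G_k$, so $D_k = \t{R}_k/G_k \geq 1$ (here $G_k>0$ is implicit in the requirement $R_k>0$; if $G_k=0$ then \eqref{eq_PL_Lsmooth2} forces $g(x_k)=0$ and $\grad f(x_k)=0$, so the iteration has already converged).

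For the upper bound I would control the second term $\tfrac{\eta}{L(x_k,R_k)}\|\grad f(x_k)\|$. First substitute $R_k = G_k$ into the LFSO \eqref{eq_lfso_composition}: the recurring quantity $\Lg R_k + \|\grad g(x_k)\|$ collapses to $(\Lg+1)G_k$, so the argument of both $h'$ and $h''$ becomes $(\Lg+1)^2 G_k^2/(2\mu_g)$. Since $h'' > 0$ under Assumption~\ref{ass_composition}, the first summand of $L(x_k,R_k)$ is nonnegative, giving the clean lower bound $L(x_k,R_k) \geq \Lg\, h'\!\left((\Lg+1)^2 G_k^2/(2\mu_g)\right)$.

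The key step is to compare this denominator with the numerator $\|\grad f(x_k)\| = h'(g(x_k))\, G_k$. Here I would invoke the left-hand PL inequality in \eqref{eq_PL_Lsmooth2}, namely $g(x_k) \leq G_k^2/(2\mu_g) \leq (\Lg+1)^2 G_k^2/(2\mu_g)$, together with the monotonicity of $h'$ (which holds since $h$ is increasing and convex). This yields $h'(g(x_k)) \leq h'\!\left((\Lg+1)^2 G_k^2/(2\mu_g)\right)$, so the two $h'$ factors cancel when we form the ratio:
\[
\frac{\eta}{L(x_k,R_k)}\|\grad f(x_k)\| \leq \frac{\eta\, h'\!\left((\Lg+1)^2 G_k^2/(2\mu_g)\right) G_k}{\Lg\, h'\!\left((\Lg+1)^2 G_k^2/(2\mu_g)\right)} = \frac{\eta}{\Lg}\, G_k.
\]
Taking the maximum with $R_k = G_k$ then gives $\t{R}_k \leq \max(1,\eta/\Lg)\,G_k$, hence $D_k \leq \max(1,\eta/\Lg)$, which combined with the lower bound yields $D_k\in[1,\max(1,\eta/\Lg)]$.

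I expect the main (though mild) obstacle to be arranging the cancellation correctly: the estimate hinges on choosing to discard the $h''$ term from $L(x_k,R_k)$ and on verifying that the argument fed to $h'$ in the denominator dominates $g(x_k)$, so that monotonicity of $h'$ eliminates it entirely. The only place the specific choice $R_k=\|\grad g(x_k)\|$ is used is to make $\Lg R_k+\|\grad g(x_k)\|$ proportional to $G_k$; everything else is bookkeeping from the definitions.
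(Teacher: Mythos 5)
Your proposal is correct and follows essentially the same route as the paper's proof: substitute $R_k=\|\grad g(x_k)\|$ into \eqref{eq_lfso_composition}, discard the nonnegative $h''$ term, and use the PL bound $g(x_k)\leq \|\grad g(x_k)\|^2/(2\mu_g)$ together with the monotonicity of $h'$ to cancel the $h'$ factors, with $D_k\geq 1$ coming directly from the definition of $\t{R}_k$ as a maximum. Your explicit handling of the degenerate case $\|\grad g(x_k)\|=0$ is a small tidiness the paper leaves implicit, but otherwise the arguments coincide.
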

\begin{proof}
    Substituting our choice of $R_k$ in \eqref{eq_lfso_composition} we get
    \begin{align*}
        \t{R}_k &= \max\left\{\|\grad g(x_k)\|, \frac{\eta h'(g(x_k)) \|\grad g(x_k)\|}{h''\left(\frac{(\Lg+1)^2 \|\grad g(x_k)\|^2}{2\mu_g}\right)(\Lg+1)^2 \|\grad g(x_k)\|^2 + h'\left(\frac{(\Lg+1)^2 \|\grad g(x_k)\|^2}{2\mu_g}\right) \Lg}\right\},  \\
        &\leq \max\left\{\|\grad g(x_k)\|, \frac{\eta h'\left(\frac{\|\grad g(x_k)\|^2}{2\mu_g}\right) \|\grad g(x_k)\|}{h'\left(\frac{(\Lg+1)^2 \|\grad g(x_k)\|^2}{2\mu_g}\right) \Lg}\right\},  \\
        &\leq \max\left\{1, \frac{\eta}{\Lg}\right\} \|\grad g(x_k)\|, 
    \end{align*}
    where the last line follows from $h$ strictly convex (so $h'$ is non-decreasing).
    This gives $D_k \leq \max\{1, \eta/\Lg\}$. 
    That $D_k \geq 1$ follows from $\t{R}_k \geq R_k$ (by definition of $\t{R}_k$).
\end{proof}

We can now show our global linear rate for Algorithm~\ref{alg_lfso_gd} for this specific problem class.

\begin{theorem}
    Suppose Assumption~\ref{ass_composition} holds and we choose $R_k = \|\grad g(x_k)\|$ and $\eta\in(0,2)$ in Algorithm~\ref{alg_lfso_gd} (with LFSO \eqref{eq_lfso_composition}).
    Then $\|\grad f(x_k)\|\to 0 $ R-linearly.
\end{theorem}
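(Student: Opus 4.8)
The plan is to recognize that, under the LFSO \eqref{eq_lfso_composition} and the choice $R_k = \|\grad g(x_k)\|$, the iteration \eqref{eq_nonuniform_iteration} is exactly a scaled gradient step on the inner function $g$, and then to verify that the resulting effective stepsize stays in the range required by Lemma~\ref{lem_pl_modified}. Since $\grad f(x) = h'(g(x))\grad g(x)$, the update rewrites as $x_{k+1} = x_k - \eta_k \grad g(x_k)$ with $\eta_k := \eta\, h'(g(x_k)) / L(x_k, \t{R}_k)$ (the degenerate case $\grad g(x_k)=0$, where $\grad f(x_k)=0$ already, being trivial). If I can show $\eta_k \in [\epsilon, 2/\Lg - \epsilon]$ for some fixed $\epsilon \in (0, 1/\Lg]$ independent of $k$, then Lemma~\ref{lem_pl_modified} gives linear convergence of $g(x_k)\to 0$ and Corollary~\ref{cor_composition} immediately yields the claimed R-linear convergence of $\|\grad f(x_k)\|$.

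To analyze $\eta_k$, I would first invoke Lemma~\ref{lem_composition_Rk} to write $\t{R}_k = D_k \|\grad g(x_k)\|$ with $D_k$ bounded, substitute into \eqref{eq_lfso_composition}, and abbreviate $G_k = \|\grad g(x_k)\|$, $a_k = \Lg D_k + 1$, and $s_k = a_k^2 G_k^2/(2\mu_g)$. Using $a_k^2 G_k^2 = 2\mu_g s_k$, the denominator becomes $L(x_k, \t{R}_k) = 2\mu_g s_k h''(s_k) + \Lg h'(s_k)$. The bounds on $D_k$ together with \eqref{eq_PL_Lsmooth2} yield the two-sided estimate $g(x_k) \leq s_k \leq c\, g(x_k)$ for an explicit constant $c$. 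For the upper bound on $\eta_k$, I would drop the nonnegative $h''$ term and use $s_k \geq g(x_k)$ with $h'$ non-decreasing to get $L(x_k,\t{R}_k) \geq \Lg h'(g(x_k))$, hence $\eta_k \leq \eta/\Lg = 2/\Lg - (2-\eta)/\Lg < 2/\Lg$.

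The main obstacle is the lower bound on $\eta_k$, which requires bounding the denominator above by a constant multiple of $h'(g(x_k))$, and in particular controlling the curvature term $s_k h''(s_k)$. The term $\Lg h'(s_k)$ is manageable since $s_k \leq c\, g(x_k)$ and the growth condition $h'(t)=\Theta(t^p)$ gives $h'(c\,g(x_k)) \leq \text{const}\cdot h'(g(x_k))$ when $g(x_k)$ is small. For the curvature term, since $h''$ is non-decreasing and $h'(0)=0$ (as $p\geq 1$), integrating yields $s h''(s) \leq \int_s^{2s} h''(u)\,du = h'(2s)-h'(s) \leq h'(2s)$; combined with $s_k \leq c\, g(x_k)$ and $h'(t)=\Theta(t^p)$, this bounds $s_k h''(s_k)$ by a constant times $h'(g(x_k))$ for $g(x_k)$ small.

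Because these $\Theta$-estimates are only valid near $0$, I would split the argument into two regimes. For $g(x_k)$ bounded away from zero, uniform positive bounds on $\eta_k$ follow from continuity of $h'$ and $h''$ and positivity of $h'$ on a compact interval, where I use that $g(x_k)$ is non-increasing (since $f(x_k)=h(g(x_k))$ is non-increasing by Lemma~\ref{lem_descent} and $h$ is strictly increasing, so $g(x_k)\leq g(x_0)$ for all $k$); for $g(x_k)$ small, the estimates above apply. Taking $\epsilon$ to be the minimum of the resulting uniform lower bound, $(2-\eta)/\Lg$, and $1/\Lg$ then verifies $\eta_k \in [\epsilon, 2/\Lg - \epsilon]$ for all $k$, so Lemma~\ref{lem_pl_modified} and Corollary~\ref{cor_composition} complete the proof.
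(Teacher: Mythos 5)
Your proposal is correct and follows essentially the same route as the paper's own proof: the reduction of \eqref{eq_nonuniform_iteration} to a scaled gradient step $x_{k+1}=x_k-\eta_k\grad g(x_k)$ via Lemma~\ref{lem_composition_Rk}, the upper bound $\eta_k\leq\eta/\Lg$ by dropping the $h''$ term, the estimate $t\,h''(t)\leq h'(2t)$ from integrating the non-decreasing $h''$, the two-regime split (using $g(x_k)\leq g(x_0)$ by monotonicity and the $\Theta(t^p)$ condition near $0$), and the conclusion via Lemma~\ref{lem_pl_modified} and Corollary~\ref{cor_composition}. No gaps to report.
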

\begin{proof}
    From Lemma~\ref{lem_composition_Rk}, we have
    \begin{align*}
        L(x_k, \t{R}_k) = h''\left(\frac{(\Lg D_k+1)^2 \|\grad g(x_k)\|^2}{2\mu_g}\right)(\Lg D_k+1)^2 \|\grad g(x_k)\|^2 + h'\left(\frac{(\Lg D_k + 1)^2 \|\grad g(x_k)\|^2}{2\mu_g}\right) \Lg.
    \end{align*}
    Our iteration can be expressed as
    \begin{align*}
        x_{k+1} = x_k - \frac{\eta\, h'(g(x_k))}{L(x_k, \t{R}_k)} \grad g(x_k).
    \end{align*}
    Since $h$ is convex, we have $h''\geq 0$ and $h'$ is non-decreasing, so using \eqref{eq_PL_Lsmooth2} we get
    \begin{align*}
        L(x_k, \t{R}_k) \geq h'\left(\frac{(\Lg D_k + 1)^2 \|\grad g(x_k)\|^2}{2\mu_g}\right) \Lg \geq h'\left(\frac{\|\grad g(x_k)\|^2}{2\mu_g}\right) \Lg \geq h'(g(x_k)) \Lg,
    \end{align*}
    which gives
    \begin{align}
        \frac{\eta \, h'(g(x_k))}{L(x_k, \t{R}_k)} \leq \frac{\eta}{\Lg}. \label{eq_composition_tmp1}
    \end{align}
    Separately, since $h'>0$ and $h''$ is non-decreasing, we have (for any $t\geq 0$)
    \begin{align*}
        t \cdot h''(t) \leq \int_{t}^{2t} h''(s) ds = h'(2t) - h'(t) \leq h'(2t),
    \end{align*}
    and so
    \begin{align*}
        h''\left(\frac{(\Lg D_k+1)^2 \|\grad g(x_k)\|^2}{2\mu_g}\right)(\Lg D_k+1)^2 \|\grad g(x_k)\|^2 \leq 2\mu_g h'\left(\frac{(\Lg D_k+1)^2 \|\grad g(x_k)\|^2}{\mu_g}\right).
    \end{align*}
    Since $h'$ is non-decreasing, we then conclude
    \begin{align*}
        L(x_k, \t{R}_k) \leq (2\mu_g + \Lg) h'\left(\frac{(\Lg D_k+1)^2 \|\grad g(x_k)\|^2}{\mu_g}\right).
    \end{align*}
    From \eqref{eq_PL_Lsmooth2} we then have 
    \begin{align*}
        L(x_k, \t{R}_k) \leq (2\mu_g + \Lg) h'\left(\frac{2 \Lg(\Lg D_k+1)^2 g(x_k)}{\mu_g}\right).
    \end{align*}
    Denoting $C_k := 2 \Lg(\Lg D_k+1)^2/\mu_g$, this means
    \begin{align*}
        \frac{\eta \, h'(g(x_k))}{L(x_k, \t{R}_k)} \geq \frac{\eta \, h'(g(x_k))}{(2\mu_g + \Lg) h'(C_k g(x_k))}.
    \end{align*}
    We note that $1 \leq C_k \leq C_{\max} := 2 \Lg(\Lg \max(1, \eta/\Lg)+1)^2/\mu_g$ from $\mu_g \leq \Lg$ and Lemma~\ref{lem_composition_Rk}, respectively.
    Also, since Algorithm~\ref{alg_lfso_gd} is monotone (Lemma~\ref{lem_descent}, noting Assumption~\ref{ass_smoothness} is implied by Assumption~\ref{ass_composition}), we have $f(x_k) \leq f(x_0)$ and so $g(x_k) \leq g(x_0)$ since $h$ is increasing.

    Now, since $h'(t)=\Theta(t^p)$ as $t\to 0^{+}$, there is an interval $[0,\delta]$ constants $0<C_1\leq C_2$ for which 
    \begin{align*}
        C_1 t^p \leq h'(t) \leq C_2 t^p, \qquad \forall t\in[0,\delta]
    \end{align*}
    If $C_k g(x_k) \leq \delta$, then we have
    \begin{align*}
        \frac{h'(g(x_k))}{h'(C_k g(x_k))} \geq \frac{C_1 g(x_k)^p}{C_2 C_k^p g(x_k)^p} \geq \frac{C_1}{C_2 C_{\max}^p} > 0.
    \end{align*}
    By contrast, if $C_k g(x_k) > \delta$, then $g(x_0) \geq g(x_k) > \delta/C_{\max}$.
    Since $h'$ is continuous and strictly positive for all $t>0$, we have
    \begin{align*}
        \epsilon_0 := \min_{\delta/C_{\max} \leq t \leq g(x_0)} \frac{h'(t)}{h'(C_{\max} t)}  > 0,
    \end{align*}
    and so in this case we have
    \begin{align*}
        \frac{h'(g(x_k)}{h'(C_k g(x_k))} \geq \frac{h'(g(x_k))}{h'(C_{\max} g(x_k))} \geq \epsilon_0.
    \end{align*}
    In either case, we have
    \begin{align}
        \frac{\eta \, h'(g(x_k))}{L(x_k, \t{R}_k)} \geq \frac{\eta}{2\mu_g + \Lg} \min\left(\frac{C_1}{C_2 C_{\max}^p}, \epsilon_0\right) > 0. \label{eq_composition_tmp2}
    \end{align}
    The result then follows by combining \eqref{eq_composition_tmp1} and \eqref{eq_composition_tmp2} with $\eta\in(0,2)$ and Corollary~\ref{cor_composition}.
\end{proof}

We reiterate that this result is a global linear rate, and does not require $x_0$ to be sufficiently close to $x^*$ (although the actual linear rate does potentially depend on $x_0$).
This applies to functions with extremely flat minima, such as $f(x)=\|x\|_2^{2p}$ for any $p\geq 1$ (by taking $g(x)=\|x\|_2^2$ and $h(t)=t^p$).

\section{Global Rate for Linear Regression} \label{sec_linreg}
Another important problem class where Algorithm~\ref{alg_lfso_gd} can achieve a global linear rate (at least in some cases), is the case of linear regression with an $\ell_p$ loss function:
\begin{align}
    \min_{x\in\R^d} f(x) := \|Ax-b\|_{2p}^{2p}= \sum_{i=1}^{n} (a_i^T x - b_i)^{2p}, \label{eq_linreg}
\end{align}
for some $A\in\R^{n\times d}$ with rows $a_i\in\R^d$ for $i=1,\ldots,n$, and $b\in\R^n$, and $p\in\N$.
The choice of norm here avoids any issues of non-smoothness in the objective, but taking $p\to\infty$ again recovers a situation with extremely flat local minima.
Our theoretical results will hold in the case where $A$ is sufficiently well-conditioned, which in particular includes the case $f(x)=\|x\|_{2p}^{2p}$.

\begin{assumption} \label{ass_linreg}
    The matrix $A$ has full rank, $n\leq d$, and $\kappa(A)^4 < n/(n-1)$, where $\kappa(A)$ is the 2-norm condition number of $A$.
\end{assumption}

Observe that Assumption~\ref{ass_linreg} implies the system $Ax=b$ is consistent.
We also note that Assumption~\ref{ass_linreg} is quite restrictive, especially when $n$ is large, requiring the rows of $A$ to be almost orthonormal.

For \eqref{eq_linreg}, we have
\begin{align*}
    \grad f(x) &= 2p A^T (Ax-b)^{2p-1} \quad \text{and} \quad \grad^2 f(x) = 2p(2p-1) A^T \operatorname{diag}(Ax-b)^{2p-2} A,
\end{align*}
where $(Ax-b)^{p}$ is understood to represent element-wise powers.
We now need to provide an LFSO for $f(x)$ \eqref{eq_linreg}. In the case $p=1$---that is, typical linear least-squares regression---we have $\grad^2 f(x)=2p(2p-1)A^T A$ and so we automatically get
\begin{align}
    L(x,R) = 2\|A\|_2^2, \label{eq_lls_lfso_p1}
\end{align}
as a valid LFSO.
In the case $p=2,3,4,\ldots$, more work is required.

\begin{lemma} \label{lem_holder}
    For any $x_1,\ldots,x_m\in\R$ and $t\geq 1$, we have
    \begin{align*}
        \left|\sum_{i=1}^{m} x_i\right|^t \leq m^{t-1} \sum_{i=1}^{m} |x_i|^t.
    \end{align*}
\end{lemma}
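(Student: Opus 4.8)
The plan is to strip away the signs first and then apply a single convexity inequality. The only content of the lemma is the convexity of $u \mapsto u^t$ for $t \ge 1$; everything else is bookkeeping.

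First I would use the triangle inequality, $\left|\sum_{i=1}^m x_i\right| \le \sum_{i=1}^m |x_i|$, together with the fact that $u \mapsto u^t$ is non-decreasing on $[0,\infty)$ (since $t \ge 1$), to reduce the claim to the purely nonnegative statement
\[
    \left(\sum_{i=1}^m |x_i|\right)^t \le m^{t-1}\sum_{i=1}^m |x_i|^t.
\]
This eliminates all sign issues and lets me treat the $|x_i|$ as arbitrary nonnegative reals.

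Next I would invoke Jensen's inequality (equivalently the power-mean inequality) for the convex function $\phi(u)=u^t$ on $[0,\infty)$, applied to the uniform average of $|x_1|,\ldots,|x_m|$, giving $\left(\tfrac1m\sum_{i=1}^m |x_i|\right)^t \le \tfrac1m\sum_{i=1}^m |x_i|^t$; multiplying through by $m^t$ yields the displayed bound immediately. An equivalent route, matching the name of the lemma, is to apply Hölder's inequality to $\sum_i |x_i|\cdot 1$ with conjugate exponents $t$ and $t/(t-1)$, obtaining $\sum_i |x_i| \le \big(\sum_i |x_i|^t\big)^{1/t}\, m^{(t-1)/t}$, and then raising both sides to the power $t$.

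I do not anticipate a genuine obstacle: the argument is essentially one line once the reduction is made. The only points requiring care are (i) confirming the reduction step uses monotonicity of $u\mapsto u^t$ correctly, and (ii) the degenerate cases $t=1$ (where both sides equal $\sum_i |x_i|$) and all $x_i=0$. The convexity/Jensen formulation handles every $t\ge 1$ uniformly, including $t=1$, whereas the Hölder formulation needs the convention that $t/(t-1)$ is interpreted as the $\ell^\infty$ exponent when $t=1$; for this reason I would present the Jensen argument as the main proof and mention Hölder only as an aside.
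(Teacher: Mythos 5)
Your proposal is correct, but your main argument differs from the paper's. The paper proves the lemma exactly via the route you relegate to an aside: it treats $t=1$ separately by the triangle inequality, and for $t>1$ applies H\"older's inequality directly to $x^T e$ with $e=(1,\ldots,1)$ and conjugate exponents $t$ and $t/(t-1)$, using $\|e\|_{t/(t-1)}^t = m^{t-1}$; note that H\"older applied to $x^Te$ handles the signs automatically, so the paper never needs your preliminary reduction $\left|\sum_i x_i\right| \le \sum_i |x_i|$. Your primary argument instead first strips signs (triangle inequality plus monotonicity of $u\mapsto u^t$ on $[0,\infty)$, both used correctly) and then applies Jensen's inequality to the convex function $\phi(u)=u^t$ at the uniform average, which gives $\left(\tfrac{1}{m}\sum_{i=1}^m |x_i|\right)^t \le \tfrac{1}{m}\sum_{i=1}^m |x_i|^t$ and hence the claim after multiplying by $m^t$. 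The two proofs are of course close cousins --- H\"older with uniform weights against the all-ones vector is dual to Jensen for power functions --- but your Jensen formulation buys a uniform treatment of all $t\ge 1$ with no case split, whereas the paper's H\"older formulation (matching the lemma's name and the dual-norm machinery it invokes) must handle $t=1$ separately, precisely because $t/(t-1)$ degenerates there, as you observed. Either write-up would be acceptable; yours is marginally more self-contained, the paper's is marginally shorter.
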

\begin{proof}
    If $t=1$, this is the triangle inequality. 
    For $t>1$, we use H\"older's inequality to get
    \begin{align*}
        |x^T e|^t \leq \left(\|x\|_t \|e\|_{t/(t-1)}\right)^t,
    \end{align*}
    where $x=(x_1,\ldots,x_m)\in\R^m$ and $e=(1,\ldots,1)\in\R^m$.
    The result then follows from $\|e\|_{t/(t-1)}^t = m^{t-1}$.
\end{proof}

Using Lemma~\ref{lem_holder}, we get
\begin{align*}
    (a_i^T x - b_i + a_i^T s)^{2p-2} \leq 2^{2p-3} \left[(a_i^T x-b_i)^{2p-2} + (a_i^T s)^{2p-2}\right],
\end{align*}
for any $p=2,3,4,\ldots$.
Noting that  $\|\operatorname{diag}(Ax-b)^{2p-2}\|_2 = \|Ax-b\|_{\infty}^{2p-2}$, for this range of $p$ we get 
\begin{align*}
    \max_{y\in B(x,R)} \|\grad^2 f(y)\|_2 &= \max_{\|s\|_2\leq R} \|\grad^2 f(x+s)\|_2, \\
    &\leq \max_{\|s\|_2\leq R} 2p (2p-1) \|A\|_2^2 \max_{i=1,\ldots,n} (a_i^T x-b_i+a_i^T s)^{2p-2}, \\
    &\leq \max_{i=1,\ldots,n} \max_{\|s\|_2\leq R} 2p(2p-1) \|A\|_2^2 \cdot 2^{2p-3} [(a_i^T x-b_i)^{2p-2} + (a_i^T s)^{2p-2}], \\
    &\leq \max_{i=1,\ldots,n} 2p(2p-1) \|A\|_2^2 \cdot 2^{2p-3} ((a_i^T x-b_i)^{2p-2} + \|a_i\|_2^{2p-2} R^{2p-2}),
\end{align*}
and so for $p=2,3,4,\ldots$ a valid LFSO for \eqref{eq_linreg} is
\begin{align}
    L(x, R) = 2p(2p-1) \|A\|_2^2 \cdot 2^{2p-3} \left[\|A x-b\|_{\infty}^{2p-2} + \left(\max_{i=1,\ldots,n} \|a_i\|_2^{2p-2}\right) R^{2p-2}\right]. \label{eq_lls_v1_beta}
\end{align}
We note that substituting $p=1$ into \eqref{eq_lls_v1_beta} recovers \eqref{eq_lls_lfso_p1} and so \eqref{eq_lls_v1_beta} is a valid LFSO for all $p\in\N$.
Observing the form of \eqref{eq_lls_v1_beta}, a natural choice for $R_k$ is $R_k=\|Ax_k-b\|_{\infty}$.

Given this choice, and noting that Algorithm~\ref{alg_lfso_gd} works if $\|\grad f(x_k)\|$ in \eqref{eq_Rtilde_defn} is replaced by any upper bound for $\|\grad f(x_k)\|$, we have for $p\geq 2$,
\begin{align*}
    \t{R}_k &= \max\left(\{\|Ax_k-b\|_{\infty}, \frac{2p\eta \|A\| \, \sqrt{n} \|Ax_k-b\|_{\infty}^{2p-1}}{2p(2p-1) \|A\|_2^2 \cdot 2^{2p-3} \left[\|Ax_k-b\|_{\infty}^{2p-2} + \left(\max_{i=1,\ldots,n} \|a_i\|_2^{2p-2}\right) \|Ax_k-b\|_{\infty}^{2p-2}\right]}\right\},  \\
    &= \max\left\{\|Ax_k-b\|_{\infty}, \frac{\eta \sqrt{n} \|Ax_k-b\|_{\infty}}{(2p-1) \|A\|_2 \cdot 2^{2p-3} \left[1 + \left(\max_{i=1,\ldots,n} \|a_i\|_2^{2p-2}\right)\right]}\right\}, \\
    &= c_1(A,n,p,\eta) \|Ax_k-b\|_{\infty},  
\end{align*}
where
\begin{align*}
    c_1(A,n,p,\eta) := \max\left\{1, \frac{\eta \sqrt{n}}{(2p-1) \|A\|_2 \cdot 2^{2p-3} \left[1 + \left(\max_{i=1,\ldots,n} \|a_i\|_2^{2p-2}\right)\right]}\right\}.
\end{align*}
Thus we have
\begin{align}
    L(x_k, \t{R}_k) &= 2p(2p-1) \|A\|_2^2 \cdot 2^{2p-3} \left[\|A x_k-b\|_{\infty}^{2p-2} + \left(\max_{i=1,\ldots,n} \|a_i\|_2^{2p-2}\right) c_1(A,n,p,\eta) \|Ax_k-b\|_{\infty}^{2p-2}\right], \nonumber \\
    &= 2p\, c_2(A,n,p,\eta) \|Ax_k-b\|_{\infty}^{2p-2}, \label{eq_linreg_Lk}
\end{align}
where
\begin{align*}
    c_2(A,n,p,\eta) := (2p-1)\|A\|_2^2 \cdot 2^{2p-3} \left[1 + \left(\max_{i=1,\ldots,n} \|a_i\|_2^{2p-2}\right) c_1(A,n,p,\eta)\right],
\end{align*}
again for $p=2,3,4,\ldots$.
For  $p=1$, since $L(x,R)$ is independent of $R$, we always have $L(x_k, \t{R}_k)=2p\|A\|_2^2$, or equivalently \eqref{eq_linreg_Lk} with $c_2(A,n,1,\eta):=\|A\|_2^2$.
Finally, our iteration is
\begin{align*}
    x_{k+1} = x_k - \frac{\eta}{c_2(A,n,p,\eta) \|Ax_k-b\|_{\infty}^{2p-2}} A^T (Ax_k-b)^{2p-1}.
\end{align*}
Since we assume our linear system is consistent, we know \eqref{eq_linreg} has a global minimizer at $f(x^*)=0$, and so a suitable error metric is the residual, $r_k := Ax_k-b$.
Written in terms of the residual, our iteration is
\begin{align}
    r_{k+1} = r_k - \frac{\eta}{c_2(A,n,p,\eta) \|r_k\|_{\infty}^{2p-2}} A A^T r_k^{2p-1}. \label{eq_linreg_resid}
\end{align}

\begin{theorem} \label{thm_linreg}
    Suppose Assumption~\ref{ass_linreg} holds, and we choose $R_k=\|Ax_k-b\|_{\infty}$ and $\eta \in (0,1]$ in Algorithm~\ref{alg_lfso_gd} (with LFSO \eqref{eq_lls_v1_beta}).
    Then for any $p\in\N$, the residual $\|r_k\|\to 0 $ Q-linearly.
\end{theorem}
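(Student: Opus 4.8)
The plan is to analyze the residual recursion \eqref{eq_linreg_resid} directly. Write $M:=AA^T\succ0$ (which is $n\times n$ and invertible since $A$ has full rank with $n\le d$), let $w_k:=r_k^{2p-1}$ denote the elementwise power, and set $\beta:=\eta/c_2(A,n,p,\eta)$, so that \eqref{eq_linreg_resid} reads $r_{k+1}=r_k-\frac{\beta}{\|r_k\|_\infty^{2p-2}}Mw_k$. The case $p=1$ is immediate, since then $w_k=r_k$, the stepsize is the constant $\eta/\|A\|_2^2$, and the recursion is gradient descent on the strictly convex quadratic $\|Ax-b\|_2^2$, which converges Q-linearly by standard theory; so I focus on $p\ge2$. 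The key structural point is that the factor $\|r_k\|_\infty^{2p-2}$ in the denominator (which originates from the choice $R_k=\|Ax_k-b\|_\infty$ and the form \eqref{eq_linreg_Lk} of $L(x_k,\t R_k)$) is exactly what cancels the degree-$(2p-1)$ nonlinearity. Setting $m:=\|r_k\|_\infty$ and $\hat r:=r_k/m$, one finds $r_{k+1}=m(\hat r-\beta M\hat w)$ with $\hat w:=\hat r^{2p-1}$, so each step is positively homogeneous of degree one in $r_k$ and the whole problem reduces to a single contraction estimate for the map $\hat r\mapsto\hat r-\beta M\hat w$ on vectors with $\|\hat r\|_\infty=1$, uniformly in $k$.

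Next I would quantify the near-orthonormality of the rows of $A$ through the spectrum of $M$, whose eigenvalues lie in $[\sigma_{\min}(A)^2,\sigma_{\max}(A)^2]$ with ratio $\kappa(A)^2$. Writing $M=\bar\lambda I+N$ with $\bar\lambda:=(\sigma_{\max}^2+\sigma_{\min}^2)/2$ gives $\|N\|_2=(\sigma_{\max}^2-\sigma_{\min}^2)/2$ and $\|N\|_2/\bar\lambda=(\kappa^2-1)/(\kappa^2+1)$, which is small precisely when $A$ is well-conditioned. The ``diagonal'' component $\hat r-\beta\bar\lambda\hat w$ is a genuine contraction in the direction of the largest coordinate: since $\hat w_i=\hat r_i^{2p-1}$ and $|\hat r_i|\le1$, this component acts coordinatewise as $u\mapsto u-\beta\bar\lambda u^{2p-1}$, a map that (for $\eta\in(0,1]$, which through the factor $(2p-1)\|A\|_2^2\,2^{2p-3}$ in $c_2$ keeps $\beta\bar\lambda(2p-1)\le1$) is nondecreasing on $[0,1]$ and moves any coordinate of modulus one strictly inward by the factor $1-\beta\bar\lambda$. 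The term $\beta N\hat w$ is then the perturbation that must be absorbed by this decrease.

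The core of the proof is the single-step estimate $\|\hat r-\beta M\hat w\|_2\le\rho<1$ for all $\hat r$ with $\|\hat r\|_\infty=1$. Expanding, $\|\hat r-\beta M\hat w\|_2^2=\|\hat r\|_2^2-2\beta\langle\hat r,M\hat w\rangle+\beta^2\|M\hat w\|_2^2$, and using $\langle\hat r,\hat w\rangle=\sum_i\hat r_i^{2p}=\|\hat r\|_{2p}^{2p}$ together with the split of $M$ gives $\langle\hat r,M\hat w\rangle=\bar\lambda\|\hat r\|_{2p}^{2p}+\langle\hat r,N\hat w\rangle$, where $|\langle\hat r,N\hat w\rangle|\le\|N\|_2\|\hat r\|_2\|\hat w\|_2$ and $\|M\hat w\|_2\le\sigma_{\max}^2\|\hat w\|_2$, with $\|\hat w\|_2=\|\hat r\|_{4p-2}^{2p-1}$. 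The decisive feature is that the guaranteed decrease is proportional to $\|\hat r\|_{2p}^{2p}$, which can be as small as $\|\hat r\|_\infty^{2p}=1$, whereas the coupling error is proportional to $\|\hat r\|_2\,\|\hat r\|_{4p-2}^{2p-1}$, which is largest (by a factor growing with $n$) when the mass of $\hat r$ is spread across many coordinates of near-maximal modulus. Controlling this ratio with the elementary comparisons $\|\hat r\|_{4p-2}\le\|\hat r\|_{2p}\le\|\hat r\|_2$ and a Hölder bound in the spirit of Lemma~\ref{lem_holder}, and comparing the per-step progress (governed by the smaller eigenvalues of $M$) against the step scale (set by $c_2\propto\|A\|_2^2=\sigma_{\max}^2$ in the LFSO) together with the quadratic overshoot, produces a net contraction factor. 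Since both the linear-decrease and quadratic terms of the expansion carry the ratio $\sigma_{\min}^2/\sigma_{\max}^2$, the positivity threshold naturally involves $\kappa(A)^4$, and the spread over $n$ coordinates contributes the factor $n/(n-1)$ of Assumption~\ref{ass_linreg}.

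I expect the main obstacle to be exactly this worst-case balancing: pinning down the extremal configuration of $\hat r$ (intuitively one coordinate at the maximum with the remaining $n-1$ just below it, which simultaneously minimizes the decrease and maximizes the off-diagonal coupling) and showing that Assumption~\ref{ass_linreg} makes the single-step contraction uniform over all admissible $\hat r$ with the stated constant, rather than some weaker $\kappa$-threshold. The homogeneity established in the first step guarantees that the resulting factor $\rho$ is independent of both $k$ and $\|r_k\|$, so once $\|r_{k+1}\|\le\rho\|r_k\|$ with $\rho<1$ is proved for one generic step, Q-linear convergence $\|r_k\|\to0$ (and hence $f(x_k)\to0$) follows immediately by iteration.
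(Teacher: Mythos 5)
Your setup reproduces the paper's starting point: you work with the residual recursion \eqref{eq_linreg_resid}, dispatch $p=1$ as a linear iteration, and correctly identify that the factor $\|r_k\|_\infty^{2p-2}$ coming from $R_k=\|Ax_k-b\|_\infty$ makes each step positively homogeneous of degree one in $r_k$, reducing everything to a uniform single-step contraction. But the contraction itself --- the entire content of the theorem for $p\ge 2$ --- is never established: you state it as an expectation and explicitly name the ``worst-case balancing'' as an unresolved obstacle. Two concrete problems in what you do write. First, the announced estimate $\|\hat r-\beta M\hat w\|_2\le\rho<1$ uniformly over $\|\hat r\|_\infty=1$ is false as stated: for $\hat r=(1,\dots,1)$ one has $\hat w=\hat r$, so $\|(I-\beta M)\hat r\|_2\ge(1-\beta\|A\|_2^2)\sqrt{n}>1$ for large $n$, since $\beta\|A\|_2^2=\eta\|A\|_2^2/c_2<1$; what you need (and implicitly use at the end) is the relative bound $\|\hat r-\beta M\hat w\|_2\le\rho\|\hat r\|_2$. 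Second, your identification of the extremal configuration is off: when many coordinates are near-maximal, $\hat w\approx\hat r$ and the decrease $\langle\hat r,\hat w\rangle=\|\hat r\|_{2p}^{2p}\approx n$ is large, so that regime is benign; the dangerous regime for your Cauchy--Schwarz bound $|\langle\hat r,N\hat w\rangle|\le\|N\|_2\|\hat r\|_2\|\hat w\|_2$ is one maximal coordinate plus many intermediate-sized ones, where the ratio $\|\hat r\|_2\|\hat w\|_2/\|\hat r\|_{2p}^{2p}$ can grow like $n^{(p-1)/(2p)}$. Your program could in principle be completed --- Assumption~\ref{ass_linreg} forces $\|N\|_2/\bar\lambda=\bigO(1/n)$, which beats $n^{-(p-1)/(2p)}$, and the needed norm comparisons ($\|\hat r\|_2\le n^{(p-1)/(2p)}\|\hat r\|_{2p}$, $\|\hat r\|_{4p-2}\le\|\hat r\|_{2p}$, $\|\hat w\|_2^2\le\|\hat r\|_{2p}^{2p}$ when $\|\hat r\|_\infty=1$) are elementary --- but none of this is carried out, and the sharp threshold $\kappa(A)^4<n/(n-1)$ you hope to recover ``exactly'' would not drop out of this spectral-splitting route.

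The missing idea that makes the paper's proof work is a different decomposition: instead of splitting $M=\bar\lambda I+N$, keep the full linear map and isolate the nonlinearity additively, $r_{k+1}=(I-\eta B)r_k+\eta B e_k$ with $B=AA^T/c_2$ and $e_k=r_k-r_k^{2p-1}/\|r_k\|_\infty^{2p-2}$, then estimate $e_k$ \emph{coordinatewise} rather than via H\"older. Writing $|[r_k]_i|=\|r_k\|_\infty/\alpha_{k,i}$ with $\alpha_{k,i}\ge 1$ gives $[e_k]_i=(1-\alpha_{k,i}^{-(2p-2)})[r_k]_i$, so every coordinate of $e_k$ is strictly shorter than the corresponding coordinate of $r_k$, and the coordinate achieving $\|r_k\|_\infty$ is annihilated; since that coordinate carries at least a $1/n$ fraction of $\|r_k\|_2^2$, one obtains $\|e_k\|_2\le(1-\epsilon)^{1/2}\|r_k\|_2$ for every $\epsilon<1/n$. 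The triangle inequality then yields the contraction factor $(1-\eta\sigma_n)+\eta\sigma_1(1-\epsilon)^{1/2}$, which is below $1$ precisely when $\sigma_n>(1-\epsilon)^{1/2}\sigma_1$, i.e.\ $\kappa(A)^4<1/(1-\epsilon)$ --- matching Assumption~\ref{ass_linreg} exactly as $\epsilon\to 1/n$. This coordinatewise estimate of the normalized nonlinearity sidesteps the worst-case balancing over configurations of $\hat r$ entirely, which is why the paper's argument is short where your sketch stalls.
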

\begin{proof}
    In the case $p=1$, the residual iteration \eqref{eq_linreg_resid} becomes
    \begin{align*}
        r_{k+1} = \left[I - \frac{\eta}{\|A\|_2^2}A A^T\right]r_k,
    \end{align*}
    and so $\|r_k\|\to 0$ linearly for all $\eta\in(0,1]$, as expected.
    
    Instead, if $p\geq 2$, it suffices to consider the case $r_k \neq 0$.
    Here, \eqref{eq_linreg_resid} may be written as
    \begin{align}
        r_{k+1} = \left[I-\frac{\eta}{c_2}AA^T\right] r_k + \frac{\eta}{c_2} AA^T \left[r_k - \frac{r_k^{2p-1}}{\|r_k\|_{\infty}^{2p-2}}\right], \label{eq_linreg_tmp1}
    \end{align}
    dropping the arguments $c_2=c_2(A,n,p,\eta)$ for brevity.
    To handle the nonlinearity in the second term, we first look at the difference
    \begin{align*}
        e_k := r_k - \frac{r_k^{2p-1}}{\|r_k\|_{\infty}^{2p-2}}.
    \end{align*}
    Looking at $e_k$ in terms of each component separately, and writing $|[r_k]_i| = \|r_k\|_{\infty}/\alpha_{k,i}$ for some $\alpha_{k,i}\geq 1$ (with $\alpha_{k,i}=\infty$ if $[r_k]_i=0$), we get
    \begin{align*}
        [e_k]_i = \left(1 - \frac{1}{\alpha_{k,i}^{2p-2}}\right)[r_k]_i.
    \end{align*}
    Note specifically that $\alpha_{k,i}=1$ for the index $i$ for which $|[r_k]_i|=\|r_k\|_{\infty}$.
    Then for any $M>1$, we have
    \begin{align*}
        \|e_k\|_2^2 &= \sum_{i: \alpha_{k,i} \geq M} \left(1-\frac{1}{\alpha_{k,i}^{2p-2}}\right)^2 [r_k]_i^2 + \sum_{i: \alpha_{k,i}<M} \left(1-\frac{1}{\alpha_{k,i}^{2p-2}}\right)^2 [r_k]_i^2, \\
        &\leq \sum_{i: \alpha_{k,i} \geq M} [r_k]_i^2 + \sum_{\alpha_{k,i}<M} \left(1-\frac{1}{M^{2p-2}}\right)^2 [r_k]_i^2, \\
        &= \|r_k\|_2^2 - \sum_{i: \alpha_{k,i}<M} \left(\frac{2}{M^{2p-2}} - \frac{1}{M^{4p-4}}\right) [r_k]_i^2.
    \end{align*}
    Since $M>1$, there is at least one $i$ with $\alpha_{k,i}<M$ (the index corresponding to $\|r_k\|_{\infty}$).
    So, 
    \begin{align*}
        \sum_{i: \alpha_{k,i}<M} [r_k]_i^2 \geq \|r_k\|_{\infty}^2 \geq \frac{1}{n}\|r_k\|_2^2,
    \end{align*}
    using standard norm equivalences.
    All together, we have
    \begin{align*}
        \|\t{r}_{k+1}\|_2^2 &\leq \left[1 - \frac{1}{n}\left(\frac{2}{M^{2p-2}} - \frac{1}{M^{4p-4}}\right)\right] \|r_k\|_2^2,
    \end{align*}
    for any $M>1$.
    This bound is tightest as $M\to 1^{+}$, with $\frac{2}{M^{2p-2}} - \frac{1}{M^{4p-4}} \to 1^{-}$ in this case.
    So by taking $M$ sufficiently close to 1, we get
    \begin{align*}
        \|\t{r}_{k+1}\|_2 &\leq (1 - \epsilon)^{1/2} \|r_k\|_2, 
    \end{align*}
    for any $\epsilon < 1/n$.
    Returning to \eqref{eq_linreg_tmp1}, we get
    \begin{align*}
        \|r_{k+1}\|_2 &\leq \left[\left\|I-\frac{\eta}{c_2}A A^T\right\|_2 + \frac{\eta \|A\|_2^2}{c_2} (1-\epsilon)^{1/2} \right] \|r_k\|_2, \nonumber \\
        &\leq \left[\left\|I-\eta B\right\|_2 + \eta\|B\|_2 (1-\epsilon)^{1/2} \right] \|r_k\|_2, \label{eq_lp_analysis_2norm}
    \end{align*}
    where $B:= A A^T/c_2$ has $\|B\|_2\leq 1$ since $c_2\geq \|A\|_2^2$.
    Thus $\|r_k\|_2\to 0$ Q-linearly provided $\left\|I-\eta B\right\|_2 + \eta\|B\|_2 (1-\epsilon)^{1/2} < 1$.

    Defining $\sigma_i>0$ as the $i$th singular value of $B\in\R^{n\times n}$, since $\|B\|_2 \leq 1$ and $B$ is full rank (since $A$ is full rank and $n\leq d$) we know $0 < \sigma_n \leq \sigma_1 \leq 1$.
    Since $\eta\in(0,1]$ by assumption, we have
    \begin{align*}
        \left\|I-\eta B\right\|_2 + \eta\|B\|_2 (1-\epsilon)^{1/2} &= \max\{|1-\eta \sigma_1|, |1-\eta\sigma_n|\} + \eta (1-\epsilon)^{1/2} \sigma_1, \\
        &= (1-\eta\sigma_n) + \eta (1-\epsilon)^{1/2} \sigma_1.
    \end{align*}
    This factor is in $(0,1)$ provided
    \begin{align*}
        \eta \sigma_n - \eta(1-\epsilon)^{1/2} \sigma_1 \in (0,1).
    \end{align*}
    Since $\eta,\sigma_n \leq 1$, this holds if $\sigma_n > (1-\epsilon)^{1/2} \sigma_1$.
    Since $\kappa(A)^2 = \kappa(B) = \sigma_1/\sigma_m$, this is equivalent to $\kappa(A)^4 < 1/(1-\epsilon)$.
    Since $\epsilon<1/n$ is arbitrary, this holds from $\kappa(A)^4 < n/(n-1)$ (Assumption~\ref{ass_linreg}).
\end{proof}

The above gives a global linear rate under some restrictive assumptions on the problem \eqref{eq_linreg}.
They are satisfied if $A=I$, for example, which gives us the objective $f(x)=\|x\|_{2p}^{2p}$ for any $p$.

\section{Numerical Experiments} \label{sec_numerics}

In this section, we provide some brief numerical experiments confirming the global linear rate for Algorithm~\ref{alg_lfso_gd} (with $\eta=1$ and appropriate choice of $R_k$) for objectives of the form $f(x) = \|x\|_2^{2p}$ as in Section~\ref{sec_composition} and $f(x)=\|x\|_{2p}^{2p}$ as in Section~\ref{sec_linreg}.
In all cases, we use $d=10$ dimensional problems and starting point $x_0=(1,\ldots,1)^T$ and $p=1,\ldots,5$.
For both objectives, the $p=1$ case gives the strongly convex objective $f(x)=\|x\|_2^2$, but for $p>1$ we only have (non-strong) convexity and a flat neighborhood of the global minimizer $x^*=0$.
In all cases we plot the normalized gradient decrease $\|\grad f(x_k)\|_2 / \|\grad f(x_0)\|_2$ as a function of iteration $k$, for up to $10^4$ iterations.

\begin{figure}[tbh]
  \centering
  \begin{subfigure}[b]{0.45\textwidth}
    \includegraphics[width=\textwidth]{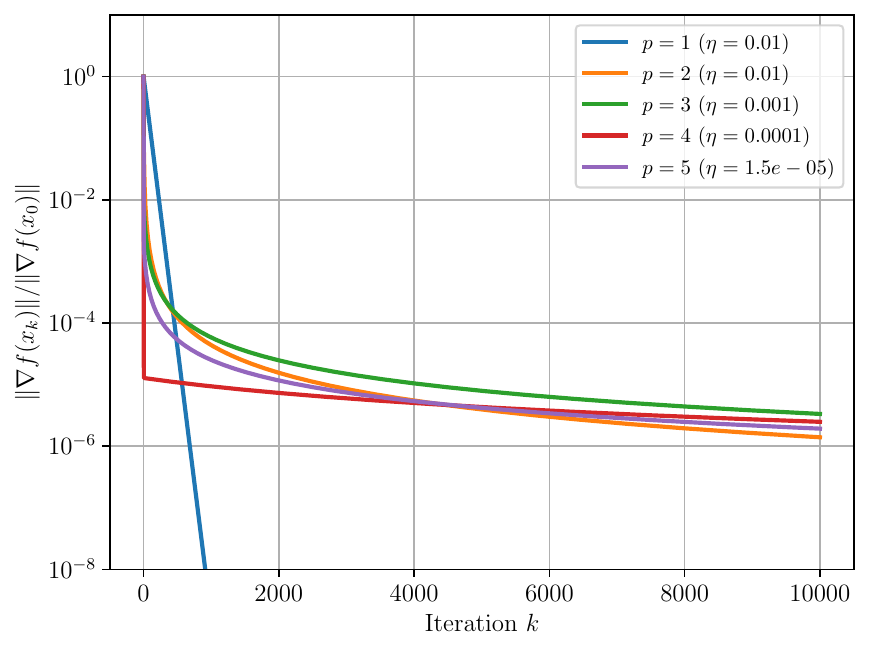}
    \caption{$f(x) = \|x\|_2^{2p}$ (varying $\eta$ values)}
    \label{fig_norm2_gd}
  \end{subfigure}
  ~
  \begin{subfigure}[b]{0.45\textwidth}
    \includegraphics[width=\textwidth]{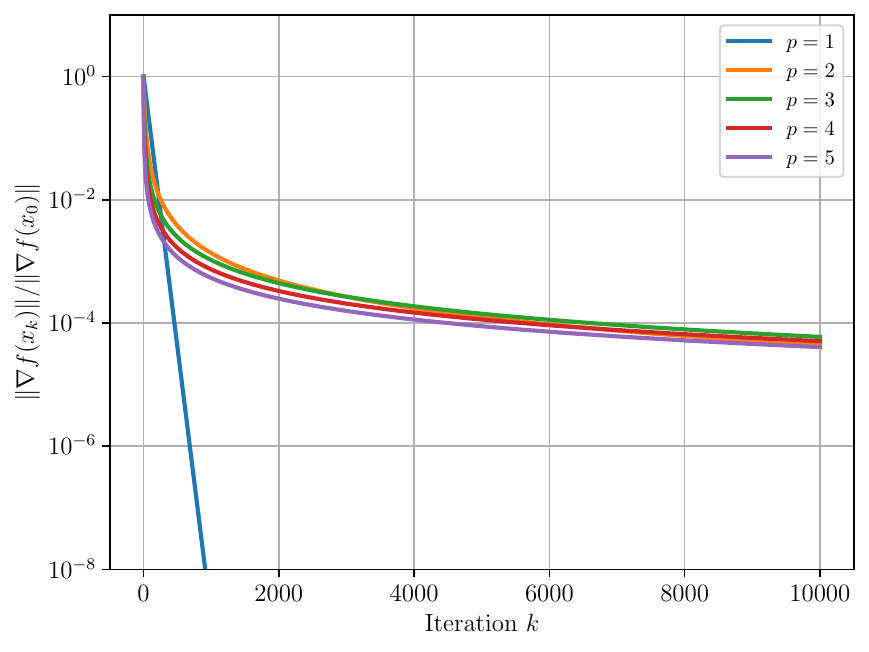}
    \caption{$f(x) = \|x\|_{2p}^{2p}$ (all use $\eta=10^{-2}$)}
    \label{fig_norm2p_gd}
  \end{subfigure}
  \caption{Global sublinear rate $\|\grad f(x_k)\|\to 0$ achieved by gradient descent with fixed stepsize for non-strongly convex functions with flat minima. Plots show $\|\grad f(x_k)\|_2 / \|\grad f(x_0)\|_2$ as a function of $k$.}
  \label{fig_simple_gd}
\end{figure}

For standard gradient descent with fixed stepsize, we get the results shown in Figure~\ref{fig_simple_gd}.
To see sufficiently fast convergence, some mild tuning of the stepsize $\eta$ was required.
For both objectives, we see that gradient descent achieves a global linear rate for $p=1$ as expected, but a clearly sublinear rate for all $p>1$, again in line with expectations.

When running Algorithm~\ref{alg_lfso_gd}, we use $R_k = \|\grad g(x_k)\|_2 = 2\|x\|_2$ for $f(x)=\|x\|_2^{2p}$ (based on the framework of Section~\ref{sec_composition}) and $R_k=\|x\|_{\infty}$ for $f(x)=\|x\|_{2p}^{2p}$ (based on the framework of Section~\ref{sec_linreg}).
The corresponding results are given in Figure~\ref{fig_simple}.
Here we see the expected global linear rate for all values of $p$, not just the strongly convex case $p>1$.
We do note however that the rate of convergence is faster for smaller values of $p$.

\begin{figure}[tbh]
  \centering
  \begin{subfigure}[b]{0.45\textwidth}
    \includegraphics[width=\textwidth]{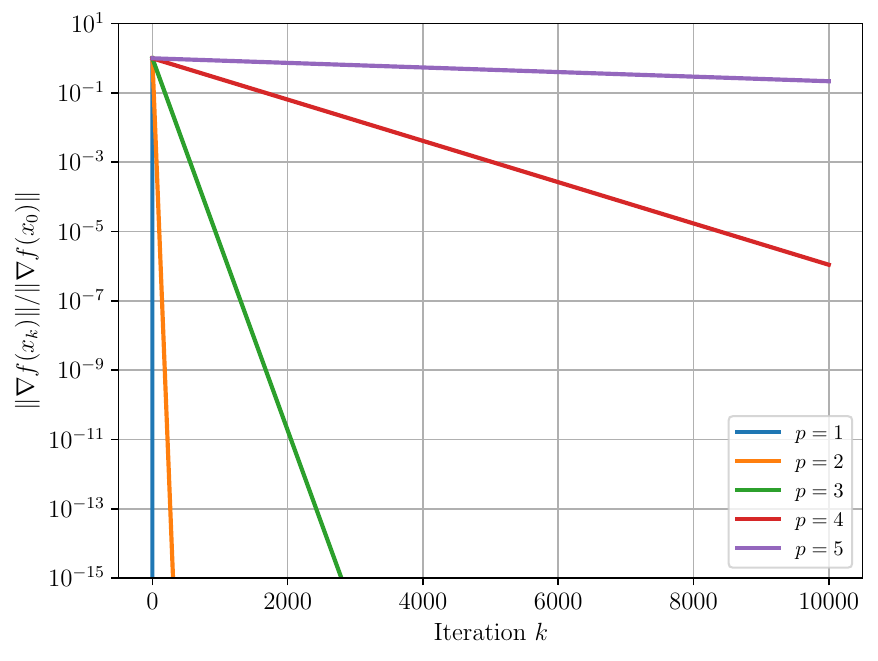}
    \caption{$f(x) = \|x\|_2^{2p}$}
    \label{fig_norm2}
  \end{subfigure}
  ~
  \begin{subfigure}[b]{0.45\textwidth}
    \includegraphics[width=\textwidth]{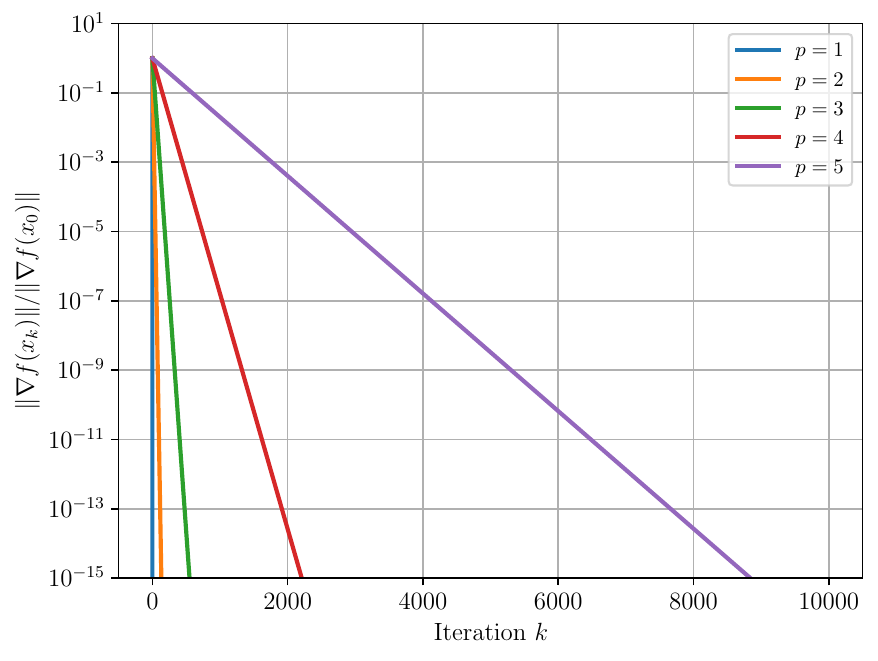}
    \caption{$f(x) = \|x\|_{2p}^{2p}$}
    \label{fig_norm2p}
  \end{subfigure}
  \caption{Global linear rate $\|\grad f(x_k)\|\to 0$ achieved by Algorithm~\ref{alg_lfso_gd} for non-strongly convex functions with flat minima. Plots show $\|\grad f(x_k)\|_2 / \|\grad f(x_0)\|_2$ as a function of $k$.}
  \label{fig_simple}
\end{figure}

\section{Conclusions and Future Work}
We have introduced a new oracle for local first-order smoothness, which exists for a wide range of functions and encodes all of the problem information relevant for selecting stepsizes for gradient descent-type methods.
Using the LFSO, we introduced a practical gradient descent-type method, and showed global and local convergence results under reasonable assumptions.
We then showed that this method gives global linear rates for some (non-strongly) convex functions with degenerate local minima, improving on the best possible performance for first-order methods without an LFSO. 

There are many potential directions for future study of LFSOs, including automatic differentiation techniques for building LFSOs, understanding how to pick the forcing sequence $R_k$ and its performance in stochastic optimization settings. 

\paragraph{Acknowledgments}
FR was partially supported by the Australian Research Council through an Industrial Transformation Training Centre for Information Resilience (IC200100022).

\addcontentsline{toc}{section}{References}
\bibliographystyle{siam}
\bibliography{refs_final} 

\begin{thebibliography}{10}

\bibitem{Attouch2022}
{\sc H.~Attouch, Z.~Chbani, J.~Fadili, and H.~Riahi}, {\em First-order
  optimization algorithms via inertial systems with {H}essian driven damping},
  Mathematical Programming, 193 (2022), pp.~133--155.

\bibitem{Attouch2018}
{\sc H.~Attouch, Z.~Chbani, J.~Peypouquet, and P.~Redont}, {\em Fast
  convergence of inertial dynamics and algorithms with asymptotic vanishing
  viscosity}, Mathematical Programming, 168 (2018), pp.~123--175.

\bibitem{Barzilai1988}
{\sc J.~Barzilai and J.~M. Borwein}, {\em Two-point step size gradient
  methods}, IMA Journal of Numerical Analysis, 8 (1988), pp.~141--148.

\bibitem{Bertsekas2015}
{\sc D.~Bertsekas}, {\em Convex Optimization Algorithms}, Athena Scientific,
  2015.

\bibitem{Bottou2018}
{\sc L.~Bottou, F.~E. Curtis, and J.~Nocedal}, {\em Optimization methods for
  large-scale machine learning}, SIAM Review, 60 (2018), pp.~223--311.

\bibitem{Burdakov2019}
{\sc O.~Burdakov, Y.~Dai, and N.~Huang}, {\em Stabilized {B}arzilai-{B}orwein
  method}, Journal of Computational Mathematics, 37 (2019), pp.~916--936.

\bibitem{Cartis2022}
{\sc C.~Cartis, N.~I.~M. Gould, and P.~L. Toint}, {\em Evaluation Complexity of
  Algorithms for Nonconvex Optimization: Theory, Computation and Perspectives},
  no.~30 in MOS-SIAM Series on Optimization, MOS/SIAM, Philadelphia, 2022.

\bibitem{Jin2021}
{\sc J.~Jin, B.~Zhang, H.~Wang, and L.~Wang}, {\em Non-convex distributionally
  robust optimization: Non-asymptotic analysis}, in 35th Conference on Neural
  Information Processing Systems, 2021.

\bibitem{Karimi2016}
{\sc H.~Karimi, J.~Nutini, and M.~Schmidt}, {\em Linear convergence of gradient
  and proximal-gradient methods under the {P}olyak-{L}ojasiewicz condition}, in
  Joint European Conference on Machine Learning and Knowledge Discovery in
  Databases, 2016, pp.~795--811.

\bibitem{Malitsky2020}
{\sc Y.~Malitsky and K.~Mishchenko}, {\em Adaptive gradient descent without
  descent}, in Proceedings of the 37th International Conference on Machine
  Learning, 2020.

\bibitem{Mei2021}
{\sc J.~Mei, Y.~Gao, B.~Dai, C.~Szepesvari, and D.~Schuurmans}, {\em Leveraging
  non-uniformity in first-order non-convex optimization}, in Proceedings of the
  38th International Conference on Machine Learning, 2021.

\bibitem{Nesterov2004}
{\sc Y.~Nesterov}, {\em Introductory Lectures on Convex Optimization}, Springer
  US, 2004.

\bibitem{Nocedal2006}
{\sc J.~Nocedal and S.~J. Wright}, {\em Numerical Optimization}, Springer
  Series in Operations Research and Financial Engineering, Springer, New York,
  2nd ed~ed., 2006.

\bibitem{Polyak1969}
{\sc B.~Polyak}, {\em Minimization of unsmooth functionals}, USSR Computational
  Mathematics and Mathematical Physics, 9 (1969), pp.~14--29.

\bibitem{Snoek2012}
{\sc J.~Snoek, H.~Larochelle, and R.~P. Adams}, {\em Practical {B}ayesian
  optimization of machine learning algorithms}, in {NIPS'12}: Proceedings of
  the 25th International Conference on Neural Information Processing Systems,
  vol.~2, 2012, pp.~2951--2959.

\bibitem{Wright2018}
{\sc S.~J. Wright}, {\em Optimization algorithms for data analysis}, in The
  Mathematics of Data, M.~W. Mahoney, J.~C. Duchi, and A.~Gilbert, eds.,
  vol.~25 of IAS/Park City Mathematics Series, American Mathematical Society,
  Providence, Rhode Island, 2018, pp.~49--97.

\bibitem{Xie2023}
{\sc C.~Xie, C.~Li, C.~Zhang, Q.~Deng, D.~Ge, and Y.~Ye}, {\em Trust region
  methods for nonconvex stochastic optimization beyond {L}ipschitz smoothness},
  arXiv preprint arXiv:2310.17319,  (2023).

\bibitem{Zhang2020}
{\sc J.~Zhang, T.~He, S.~Sra, and A.~Jadbabaie}, {\em Why gradient clipping
  accelerates training: A theoretical justification for adaptivity}, in
  Proceedings of the 8th International Conference on Learning Representations
  {ICLR}, 2020.

\end{thebibliography}

\end{document}